\newtheorem{thm}{Theorem}[section]
\newtheorem{cor}{Corollary}[section]
\newtheorem{lem}{Lemma}[section]
\newtheorem{dfn}{Definition}[section]
\newtheorem{ex}{Example}[section]
\newtheorem{rem}{Remark}[section]
\newcommand{\Z}{\mathbb{Z}}
\def\ps@pprintTitle{%
 \let\@oddhead\@empty
 \let\@evenhead\@empty
 \def\@oddfoot{\centerline{\thepage}}%
 \let\@evenfoot\@oddfoot}
\begin{document}

\begin{center}
{\bf {\Large Some Necessary and Sufficient Conditions for Diophantine Graphs}}\\
\end{center}

\begin{center}
{ \bf M. A. Seoud*$^3$, \ A. Elsonbaty*$^2$, \ A. Nasr*$^1$, \ M. Anwar*$^4$}
\vspace{3mm}\\
 *Department of Mathematics, Faculty of Science, Ain Shams University, 11566, Abbassia, Cairo, Egypt.
\vspace{3mm}\\
e-mails: $^1$ \ \href{mailto:amr_fatouh@sci.asu.edu.eg}{\url{amr_fatouh@sci.asu.edu.eg}}, $^2$ \ \href{mailto:ahmedelsonbaty@sci.asu.edu.eg}{\url{ahmedelsonbaty@sci.asu.edu.eg}},\\
 \hspace{0.9cm}$^3$ \ \href{mailto:m.a.seoud@sci.asu.edu.eg}{\url{m.a.seoud@sci.asu.edu.eg}},\hspace{0.2cm} $^4$ \ \href{mailto:mohamedanwar@sci.asu.edu.eg}{\url{mohamedanwar@sci.asu.edu.eg}},
\end{center}

\begin{center}
  MSC code: 05A10, 05C07, 05C78, 11A05, 11A25, 11B75, 11D04, 11D88.
\end{center}

%\begin{center}
%{ \bf A. Nasr*$^1$, \ A. Elsonbaty*$^2$, \ M. A. Seoud*$^3$, \ M. Anwar*$^4$}
%\vspace{3mm}\\
% *Department of Mathematics, Faculty of Science, Ain Shams University, 11566, Abbassia, Cairo, Egypt.
%\vspace{0.5cm}\\
%e-mails:       $^1$ \ \href{mailto:amr_fatouh@sci.asu.edu.eg}{\url{amr_fatouh@sci.asu.edu.eg}},\\
%
%\hspace{2cm}   $^2$ \ \href{mailto:ahmedelsonbaty@sci.asu.edu.eg}{\url{ahmedelsonbaty@sci.asu.edu.eg}},\\
%
%\hspace{1.1cm} $^3$ \ \href{mailto:m.a.seoud@sci.asu.edu.eg}{\url{m.a.seoud@sci.asu.edu.eg}},\\
%
%\hspace{1.6cm} $^4$ \ \href{mailto:mohamedanwar@sci.asu.edu.eg}{\url{mohamedanwar@sci.asu.edu.eg}},
%\end{center}

%\begin{center}
%  A. Nasr* \footnote{$``<{\rm amr}_{-}{\rm fatouh} \ @\ {\rm sci.asu.edu.eg}>''$}
%, A. Elsonbaty* \footnote {$``<{\rm ahmedelsonbaty} \ @\ {\rm sci.asu.edu.eg}>''$}
%, M. A. Seoud* \footnote{$``<{\rm m.a.seoud} \ @\ {\rm sci.asu.edu.eg}>''$}
%, M. Anwar* \footnote {$``<{\rm mohamedanwar} \ @\ {\rm sci.asu.edu.eg}>''$}\\
%{\footnotesize *Department of Mathematics, Faculty of Science, Ain Shams University, 11566, Abbassia, Cairo, Egypt.}
%\end{center}

\begin{abstract}
A linear Diophantine equation $ax+by=n$ is solvable if and only if $\gcd(a,b)$ divides $n$. A graph $G$ of order $n$ is called Diophantine if there exists a labeling function $f$ of vertices such that $\gcd(f(u),f(v))$ divides $n$ for every two adjacent vertices $u,v$ in $G$. In this work, maximal Diophantine graphs on $n$ vertices, $D_n$, are defined, studied and generalized. The independence number, the number of vertices with full degree and the clique number of $D_n$ are computed. Each of these quantities is the basis of a necessary condition for the existence of such a labeling.
\end{abstract}

\begin{flushleft}
\textbf{Keywords}: Diophantine graph, Maximal Diophantine graph, labeling isomorphism, $\gamma$-labeled graph.
\end{flushleft}

%%%%%%%%%%%%%%%%%%%%%%%%%%%%%%%%%%%%%%%%%%%%%%%%%%%%%%%%%%%%%%%%%%%%%%%%%%%
%%%%%%%%%%%%%%%%%%%%%%%%%%%%%%%%%%%%%%%%%%%%%%%%%%%%%%%%%%%%%%%%%%%%%%%%%%%
\section{Introduction}
%%%%%%%%%%%%%%%%%%%%%%%%%%%%%%%%%%%%%%%%%%%%%%%%%%%%%%%%%%%%%%%%%%%%%%%%%%%
%%%%%%%%%%%%%%%%%%%%%%%%%%%%%%%%%%%%%%%%%%%%%%%%%%%%%%%%%%%%%%%%%%%%%%%%%%%

\hspace{0.5cm} Assuming that a graph $G=(V, E)$ is a finite simple undirected graph with $|V|$ vertices and $|E|$ edges, where $V=V(G)$ is the vertex set, $E=E(G)$ is the edge set, $|V|$ is called the order of the graph $G$ and $|E|$ is called the size of the graph $G$. In general, $|X|$ denotes the cardinality of a set $X$. $\delta(G)$ denotes the minimum degree of the vertices in a graph $G$. A set of vertices $S$ of a graph $G$ is said to be an independent set or a free set if for all $u,v\in S$, $u,v$ are nonadjacent in $G$. The independence number, denoted by $\alpha(G)$, is the maximum order of an independent set of vertices of a graph $G$. The operation of adding an edge $e=uv$ to a graph $G$ joining the vertices $u,v$ yields a new graph with the same vertex set $V(G)$ and edge set $E(G)\cup\{uv\}$, which is denoted $G+\{uv\}$. The operation of deleting an edge $e=uv$ from a graph $G$ removes only that edge, the resulting graph is denoted $G-\{uv\}$. A spanning subgraph of a graph $G$ is a subgraph of $G$ obtained by deleting edges only, adding edges to a graph $G$ yields a spanning supergraph of $G$. The join of two graphs $G$ and $H$ is denoted by $G+H$, it has the following vertex set $V(G+H)= V(G)\cup V(H)$ and edge set $E(G+H)=E(G)\cup E(H)\cup\{uv: u\in V(G) \ \mbox{and} \ v\in V(H)\}$. %The graph union of two graphs $G$ and $H$ is the graph $G\cup H$ whose vertex set and edge set are the disjoint unions, respectively, of the vertex sets and the edge sets of $G$ and $H$. 
$K_n,\overline{K_n}$ and $C_n$ denote the complete graph, the null graph and the cycle graph of order $n$ respectively. % while $K_{n,m}$ denotes the complete bipartite graph with two free sets of orders $n,m$. 
We follow terminology and notations in graph theory as in A. Bickle \cite{Bickle}, J. L. Gross; J. Yellen; P. Zhang \cite{G-Y-Z}, F. Harary \cite{Harary} and K. H. Rosen \cite{Rosen2}.

The concept of prime labeling was introduced by R. Entringer and was discussed in a paper by A. Tout \cite{Tout}. A graph $G$ is called a prime graph if there exists a bijective map $f:V\rightarrow \{1, 2, \dots, n\}$ such that for all $uv\in E$, $(f(u),f(v))=1$.  Some authors investigated algorithms for prime labeling in  \cite{sonbaty} and necessary and sufficient conditions are studied in \cite{Seoud1}, \cite{Seoud-Y}. The notion of Diophantine labeling is an extension of that of prime labeling. In this paper, we give a brief summary of some definitions and some results pertaining to Diophantine graphs. A generalization encompassing prime graphs, Diophantine graphs and another type of graph labeling is introduced and discussed. In maximal Diophantine graphs, an arithmetic function is established to calculate the number of vertices with full degree and the order of the maximal clique or the maximal complete subgraph, the independence number is computed and necessary and sufficient conditions are provided with these bounds. Moreover, an explicit formula for a vertex with minimum degree and minimum label is proved. Furthermore, a new perspective on degree sequences for establishing necessary conditions is presented.  Relevant definitions and notations from number theory are mentioned. We follow the basic definitions and notations of number theory as in T. M. Apostol \cite{Apostol} and D. Burton \cite{Burton}.
%Finding the solutions of the Diophantine equations are important in both analytic and algebraic number theory.

%Finding the solutions of the Diophantine equations are important in both analytic and algebraic number theory. In this paper,We will give a brief summary of some definitions of Diophantine graph. e introduced an arithmetic function which are used to calculate the number of vertices that have full degree and the clique number of maximal Diophantine graphs. We compute the independence number in maximal Diophantine graphs. we provide sufficient and some necessary conditions for maximal Diophantine graphs. We give a new vision for degree sequences of graphs which are useful to have a necessary condition for maximal Diophantine graphs. We proved an explicit formula for a vertex with minimum degree and minimum label in maximal Diophantine graphs. We mention some definitions and notations in number theory which are useful for the present discussion. Finding solutions to Diophantine equations is crucial in number theory.

%We will give a brief summary of some definitions of Diophantine graph. 

This manuscript is structured as follows. Section 2  provides some results of $\gamma$-labelings. Section 3 is partitioned into three subsections, each presents some results related to maximal Diophantine graphs. Subsection 3.1 discusses some basic bounds and necessary and sufficient conditions for maximal Diophantine graphs. Subsection 3.2 and 3.3 provided some necessary conditions  and explore properties of the minimum degree and the degree sequence in maximal Diophantine graphs. Section 4 includes some examples of non-Diophantine graphs to explain the relation among these necessary conditions.

\begin{dfn}\label{dfn2}\cite{Nasr}
  Let $G$ be a graph with $n$ vertices. The graph $G$ is called a Diophantine graph if there exists a bijective map $f:V\rightarrow \{1, 2, \dots, n\}$ such that for all $uv\in E$, $(f(u),f(v))\mid n$. Such a map $f$ is called a Diophantine labeling of $G$. A maximal Diophantine graph with $n$ vertices, denoted by $(D_n,f)$, is a Diophantine graph such that adding any new edge yields a non-Diophantine graph. If there is no ambiguity, we drop $f$ from $(D_n,f)$ and write it simply $D_n$.
\end{dfn}
Clearly, if a graph $G$ is Diophantine, then $|E(G)|\leq|E(D_n)|$. A formula that computes the number of edges of $D_n$ can be found in \cite{Nasr}.  Some maximal Diophantine graphs are given in the next example.

\begin{ex}
The following three graphs are examples of maximal Diophantine graphs.
\begin{figure*}[h!]
\centering
\begin{subfigure}{0.3\textwidth}
 \centering
  \begin{tikzpicture}
  [scale=.6,auto=center,every node/.style={circle,fill=blue!20}]
  \node (v9)   at (0,4)           {$9$};
  \node (v1)   at (3,2.5)         {$1$};
  \node (v7)   at (3.7,0)         {$7$};
  \node (v5)   at (-3,2.5)        {$5$};
  \node (v3)   at (-3.7,0)        {$3$};

  \node (v2)[circle,fill=red!20]  at (-3,-2.5)    {$2$};
  \node (v4)[circle,fill=red!20]  at (-1,-3)      {$4$};
  \node (v6)[circle,fill=red!20]  at (1,-3)       {$6$};
  \node (v8)[circle,fill=red!20]  at (3,-2.5)     {$8$};
% these are the straight lines from one vertex to another
  \draw (v1) -- (v2);
  \draw (v1) -- (v3);
  \draw (v1) -- (v4);
  \draw (v1) -- (v5);
  \draw (v1) -- (v6);
  \draw (v1) -- (v7);
  \draw (v1) -- (v8);
  \draw (v1) -- (v9);

  \draw (v3) -- (v2);
  \draw (v3) -- (v4);
  \draw (v3) -- (v5);
  \draw (v3) -- (v6);
  \draw (v3) -- (v7);
  \draw (v3) -- (v8);
  \draw (v3) -- (v9);

  \draw (v5) -- (v2);
  \draw (v5) -- (v4);
  \draw (v5) -- (v6);
  \draw (v5) -- (v7);
  \draw (v5) -- (v8);
  \draw (v5) -- (v9);

  \draw (v7) -- (v2);
  \draw (v7) -- (v4);
  \draw (v7) -- (v6);
  \draw (v7) -- (v8);
  \draw (v7) -- (v9);

  \draw (v9) -- (v2);
  \draw (v9) -- (v4);
  \draw (v9) -- (v6);
  \draw (v9) -- (v8);
  \end{tikzpicture}\caption{Graph $D_9$}
 \end{subfigure}
~~~
\begin{subfigure}{0.3\textwidth}
 \centering
  \begin{tikzpicture}
  [scale=.6,auto=center,every node/.style={circle,fill=blue!20}]
  \node (v4)   at (3.5,0)         {$4$};

  \node (v1)   at (3.7,2)           {$1$};
  \node (v2)   at (2.5,4)           {$2$};
  \node (v10)  at (0,4.9)           {$10$};
  \node (v7)   at (-2.5,4)          {$7$};
  \node (v5)   at (-3.7,2)          {$5$};

  \node (v8)   at (-3.5,0)        {$8$};

  \node (v3)[circle,fill=red!20]  at (0,-2.5)     {$3$};
  \node (v6)[circle,fill=red!20]  at (-2,-2)      {$6$};
  \node (v9)[circle,fill=red!20]  at (2,-2)       {$9$};
% these are the straight lines from one vertex to another
  \draw (v1) -- (v2);
  \draw (v1) -- (v3);
  \draw (v1) -- (v4);
  \draw (v1) -- (v5);
  \draw (v1) -- (v6);
  \draw (v1) -- (v7);
  \draw (v1) -- (v8);
  \draw (v1) -- (v9);
  \draw (v1) -- (v10);

  \draw (v5) -- (v2);
  \draw (v5) -- (v3);
  \draw (v5) -- (v4);
  \draw (v5) -- (v6);
  \draw (v5) -- (v7);
  \draw (v5) -- (v8);
  \draw (v5) -- (v9);
  \draw (v5) -- (v10);

  \draw (v7) -- (v2);
  \draw (v7) -- (v3);
  \draw (v7) -- (v4);
  \draw (v7) -- (v6);
  \draw (v7) -- (v8);
  \draw (v7) -- (v9);
  \draw (v7) -- (v10);

  \draw (v2) -- (v3);
  \draw (v2) -- (v4);
  \draw (v2) -- (v6);
  \draw (v2) -- (v8);
  \draw (v2) -- (v9);
  \draw (v2) -- (v10);

  \draw (v10) -- (v3);
  \draw (v10) -- (v4);
  \draw (v10) -- (v6);
  \draw (v10) -- (v8);
  \draw (v10) -- (v9);

  \draw (v4) -- (v3);
  \draw (v4) -- (v6);
  \draw (v4) -- (v9);

  \draw (v8) -- (v3);
  \draw (v8) -- (v6);
  \draw (v8) -- (v9);
 \end{tikzpicture}\caption{Graph $D_{10}$}
 \end{subfigure}
~~
 \begin{subfigure}{0.25\textwidth}
 \centering
  \begin{tikzpicture}
  [scale=.6,auto=center,every node/.style={circle,fill=blue!20}]
  \node (v9)   at (3.7,0)         {$9$};
  \node (v1)   at (3,2.5)         {$1$};
  \node (v11)  at (1.5,4)         {$11$};
  \node (v7)   at (-1.5,4)        {$7$};
  \node (v5)   at (-3,2.5)        {$5$};
  \node (v3)   at (-3.7,0)        {$3$};

  \node (v2)[circle,fill=red!20]  at (-3,-2.5)    {$2$};
  \node (v4)[circle,fill=red!20]  at (-1.5,-3)    {$4$};
  \node (v6)[circle,fill=red!20]  at (0,-3.5)     {$6$};
  \node (v8)[circle,fill=red!20]  at (1.5,-3)     {$8$};
  \node (v10)[circle,fill=red!20] at (3,-2.5)     {$10$};
% these are the straight lines from one vertex to another
  \draw (v1) -- (v2);
  \draw (v1) -- (v3);
  \draw (v1) -- (v4);
  \draw (v1) -- (v5);
  \draw (v1) -- (v6);
  \draw (v1) -- (v7);
  \draw (v1) -- (v8);
  \draw (v1) -- (v9);
  \draw (v1) -- (v10);
  \draw (v1) -- (v11);

  \draw (v11) -- (v2);
  \draw (v11) -- (v3);
  \draw (v11) -- (v4);
  \draw (v11) -- (v5);
  \draw (v11) -- (v6);
  \draw (v11) -- (v7);
  \draw (v11) -- (v8);
  \draw (v11) -- (v9);
  \draw (v11) -- (v10);

  \draw (v7) -- (v2);
  \draw (v7) -- (v3);
  \draw (v7) -- (v4);
  \draw (v7) -- (v5);
  \draw (v7) -- (v6);
  \draw (v7) -- (v8);
  \draw (v7) -- (v9);
  \draw (v7) -- (v10);

  \draw (v5) -- (v2);
  \draw (v5) -- (v3);
  \draw (v5) -- (v4);
  \draw (v5) -- (v6);
  \draw (v5) -- (v8);
  \draw (v5) -- (v9);

  \draw (v3) -- (v2);
  \draw (v3) -- (v4);
  \draw (v3) -- (v8);
  \draw (v3) -- (v10);

  \draw (v9) -- (v2);
  \draw (v9) -- (v4);
  \draw (v9) -- (v8);
  \draw (v9) -- (v10);
  \end{tikzpicture} \caption{Graph $D_{11}$}
 \end{subfigure}\caption{Some maximal Diophantine graphs $D_9$, $D_{10}$ and $D_{11}$}\label{figure0}
\end{figure*}
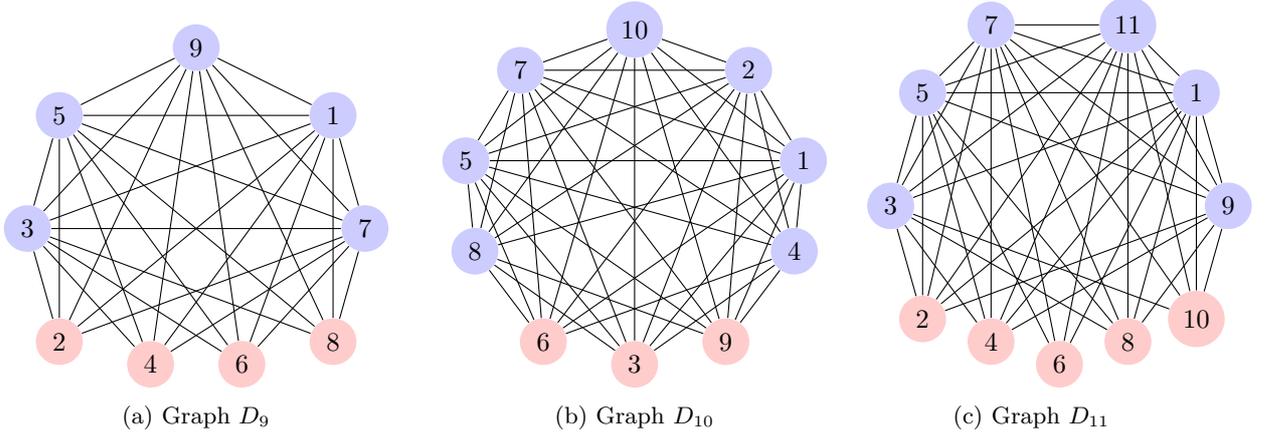
\end{ex}

\begin{dfn}\cite{Nasr}
  For a given an integer $n\in \Z^+$ and a prime $p\in \mathbb{P}$, the successor of the $p$-adic valuation is denoted by $\acute{v}_p(n):=v_p(n)+1$, where $v_p(n)$ is the $p$-adic valuation, $ \Z^+$ is set of positive integers and $\mathbb{P}$ is the set of prime numbers. The number $p^{\acute{v}_p(n)}$ is called the critical prime power number with respect to $p,n$.
\end{dfn}

In the rest of this paper, the following arithmetic functions $\pi,\omega$ and $\tau$ will be used, (see \cite{Apostol}, \cite{Burton}): Let $n\in \Z^+$.
\begin{equation*}
     \pi(n):=\big|\{p\in\mathbb{P}: 2\leq p\leq n\}\big|, \quad \omega(n):=\big|\{p\in\mathbb{P}: p\mid n, \ 2\leq p\leq n\}\big|,
      \quad\tau(n):=\big|\{d\in \Z^+ : d\mid n\}\big|.
\end{equation*}

\begin{lem}\label{lem1}\cite{Nasr}
  Suppose $D_n$ is a maximal Diophantine graph of order $n$. For every $u,v\in V(D_n)$, $uv\notin E(D_n)$ if and only if there exists $p\in\mathbb{P}$ such that 
  $$f(u), f(v)\in M_{p^{\acute{v}_{p}(n)}}:=\left\{kp^{\acute{v}_{p}(n)}: \ k=1,2,\dots,\left\lfloor\frac{n}{p^{\acute{v}_{p}(n)}}\right\rfloor\right\}.$$
\end{lem}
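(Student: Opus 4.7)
The plan is to argue both directions separately, both pivoting on the elementary fact that $\gcd(f(u),f(v))\nmid n$ precisely when some prime $p$ satisfies $v_p\bigl(\gcd(f(u),f(v))\bigr) > v_p(n)$, equivalently when $p^{\acute{v}_p(n)}$ divides $\gcd(f(u),f(v))$.

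For the ``if'' direction, I would assume $f(u),f(v)\in M_{p^{\acute{v}_p(n)}}$ for some prime $p$, so that both labels are multiples of $p^{\acute{v}_p(n)}=p^{v_p(n)+1}$. Then $p^{v_p(n)+1}$ divides $\gcd(f(u),f(v))$, while by the very definition of $v_p(n)$ it does not divide $n$. Hence $\gcd(f(u),f(v))\nmid n$, so the Diophantine condition fails on the pair $\{u,v\}$ and $uv$ cannot appear as an edge of a graph admitting the Diophantine labeling $f$; in particular $uv\notin E(D_n)$.

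For the ``only if'' direction, I would invoke the maximality of $(D_n,f)$. Suppose $uv\notin E(D_n)$. Because every existing edge of $D_n$ already satisfies the Diophantine condition and the spanning supergraph $D_n+\{uv\}$ is not Diophantine under $f$, the only way this supergraph can fail is on the new edge itself, giving $\gcd(f(u),f(v))\nmid n$. Pick a prime $p$ witnessing this failure; then $p^{v_p(n)+1}=p^{\acute{v}_p(n)}$ divides $\gcd(f(u),f(v))$ and therefore divides $f(u)$ and $f(v)$ separately. Since $f(u),f(v)\in\{1,2,\dots,n\}$, both labels are elements of $M_{p^{\acute{v}_p(n)}}$, as required.

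The only subtle point, and where I would be most careful, is the second direction: one has to interpret ``adding any new edge yields a non-Diophantine graph'' in Definition~1.2 as saying the supergraph fails the Diophantine condition under the same labeling $f$ (otherwise maximality would not force a local obstruction on the single pair $\{u,v\}$). Once that reading is fixed, both directions are short; the rest is bookkeeping with $p$-adic valuations and the definition of $M_{p^{\acute{v}_p(n)}}$.
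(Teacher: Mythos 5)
Your proof is correct. The paper states Lemma~\ref{lem1} without proof, citing it from \cite{Nasr}, so there is no in-paper argument to compare against; your two-directional argument via the $p$-adic characterization of $\gcd(f(u),f(v))\nmid n$ (namely that $\gcd(f(u),f(v))\nmid n$ exactly when $p^{\acute{v}_p(n)}$ divides both labels for some prime $p$, combined with $f(u),f(v)\leq n$ to land in $M_{p^{\acute{v}_p(n)}}$) is the natural and complete one. The interpretive worry you flag at the end is unnecessary: under the definition as written, $D_n+\{uv\}$ admits no Diophantine labeling at all, which is stronger than what you need --- in particular $f$ itself is not a Diophantine labeling of $D_n+\{uv\}$, and since $f$ already satisfies the divisibility condition on every edge of $D_n$, the failure is forced onto the single pair $\{u,v\}$ exactly as in your argument.
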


\begin{thm}\label{lem2}\cite{Nasr}
   Suppose $D_n$ is a maximal Diophantine graph of order $n$. For every $u\in V(D_n)$,
   $$\deg(u)=n-1\quad\mbox{if and only if}\quad f(u)\mid n\quad\mbox{\textbf{or}}\quad \frac{n}{2}<f(u)=p^{\acute{v}_p(n)}<n,$$
   where $p\in\mathbb{P}$ and the exclusive \textbf{or} will be typed in bold while the inclusive or is as usual.
\end{thm}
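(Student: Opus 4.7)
The plan is to treat Lemma \ref{lem1} as the main engine: it says that a vertex $u$ has full degree precisely when no other label $f(v)$ shares membership with $f(u)$ in any set $M_{p^{\acute{v}_p(n)}}$. Reading this contrapositively, $\deg(u)=n-1$ is equivalent to the statement that for every prime $p\leq n$, either $f(u)\notin M_{p^{\acute{v}_p(n)}}$, or $M_{p^{\acute{v}_p(n)}}\cap\{1,\dots,n\}=\{f(u)\}$ (the latter exploiting bijectivity of $f$). With this reformulation the theorem reduces to a purely arithmetic statement about the location of $f(u)$ relative to the divisors and critical prime powers of $n$.

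For the sufficiency I plan to check each of the two alternative clauses. If $f(u)\mid n$, then for every prime $p$ we have $v_p(f(u))\leq v_p(n)$, so $p^{\acute{v}_p(n)}\nmid f(u)$ and thus $f(u)\notin M_{p^{\acute{v}_p(n)}}$; Lemma \ref{lem1} immediately gives $\deg(u)=n-1$. If instead $n/2<f(u)=p^{\acute{v}_p(n)}<n$, I verify that for any prime $q\neq p$ the pure prime power $f(u)$ cannot lie in $M_{q^{\acute{v}_q(n)}}$, while for $q=p$ the inequality $2f(u)>n$ forces $M_{p^{\acute{v}_p(n)}}=\{f(u)\}$, so again no other label can be paired with $f(u)$ in the forbidden way.

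The necessity is the substantive half. Assuming $\deg(u)=n-1$ and that the first clause fails, i.e.\ $f(u)\nmid n$, I would extract a prime $p$ with $v_p(f(u))>v_p(n)$, so that $p^{\acute{v}_p(n)}\mid f(u)$ and hence $f(u)\in M_{p^{\acute{v}_p(n)}}$. Lemma \ref{lem1} combined with the bijectivity of $f$ onto $\{1,\dots,n\}$ then forces $M_{p^{\acute{v}_p(n)}}\cap\{1,\dots,n\}=\{f(u)\}$, which in turn gives $\lfloor n/p^{\acute{v}_p(n)}\rfloor=1$, i.e.\ $n/2<p^{\acute{v}_p(n)}\leq n$, and identifies $f(u)=p^{\acute{v}_p(n)}$. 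The upper boundary $f(u)=n$ is ruled out because it would imply $f(u)\mid n$, contradicting the working assumption, so the strict inequality $f(u)<n$ is recovered.

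The main obstacle I anticipate is not a hard combinatorial calculation but rather careful bookkeeping at the boundary when $n$ is itself a prime power: the equality $p^{\acute{v}_p(n)}=n$ would place $n$ in both clauses if the second inequality were weak, which is exactly why the statement insists on $f(u)<n$. Ensuring that the \textbf{or} is genuinely exclusive is then immediate from $v_p(p^{\acute{v}_p(n)})=v_p(n)+1>v_p(n)$, so that $f(u)=p^{\acute{v}_p(n)}$ together with $f(u)<n$ automatically excludes $f(u)\mid n$. Handling this technicality cleanly is what dictates the precise form of the statement.
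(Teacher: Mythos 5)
The paper does not prove this theorem; it is imported from \cite{Nasr} as a cited result, so there is no in-paper proof to compare against. Your argument is correct and is the natural derivation one would expect: you translate Lemma \ref{lem1} into the statement that $\deg(u)=n-1$ iff for every prime $p$ with $p^{\acute{v}_p(n)}\mid f(u)$ the set $M_{p^{\acute{v}_p(n)}}$ reduces to $\{f(u)\}$, and then handle both directions and the boundary case $p^{\acute{v}_p(n)}=n$ (which is exactly what makes the \textbf{or} exclusive) correctly. No gaps.
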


%\begin{thm}\label{thm_V.I.}\cite{Nasr}
%  The number of edges of the maximal Diophantine graph $D_n$ with order $n$ is given by
%\begin{equation*}
%  C(n,2)-\sum_{1\leq i\leq \pi(n)} C\Bigg(\Bigg\lfloor\frac{n}{p_i^{\acute{v}_{p_i}(n)}}\Bigg\rfloor, 2\Bigg)
%        +\sum_{1\leq i<j\leq \pi(n)}C\Bigg(\Bigg\lfloor\frac{n}{p_{i}^{\acute{v}_{p_i}}(n)p_{j}^{\acute{v}_{p_j}(n)}}\Bigg\rfloor, 2\Bigg)
%        -\dots + (-1)^{\pi(n)}C\left(\left\lfloor\frac{n}{\prod\limits_{1\leq i\leq \pi(n)} p_i^{\acute{v}_{p_i}(n)}}\right\rfloor, 2\right),
%  \end{equation*}
% where $p_1, p_2, \dots, p_{\pi(n)}<n$ are distinct prime numbers and $C(n,r)$ is the binomial coefficient with $C(n,r):=0$ if $n<r$ and $ C(n,r):=\binom{n}{r}$ if $n\geq r$.
%\end{thm}

The reduced label $f^*(u)$ of a vertex $u$ in a labeled graph $G$ with $n$ vertices is defined as $f^*(u):=\frac{f(u)}{(f(u), n)}.$
%\begin{lem}\label{lem1.1}\rm{
%  Let $k,n$ be integers. If $1\leq k\leq \frac{n}{2}$, then there exists an integer $t>0$ such that $\frac{n}{2}\leq 2^tk \leq n$.}
%\end{lem}

%\begin{lem}
%  \cite{Nasr}\label{rem1} Let $f^*(u), f^*(v)$ be the reduced labels of $u,v\in V(D_n)$.
%\begin{itemize}
%  \item[i.] For every $p\in\mathbb{P}$, 
%   $p\mid f^*(u)$ if and only if $p^{\acute{v}_p(n)}\mid f(u)$.
%  \item[ii.] If $f(u)\mid f(v)$, then $f^*(u)\mid f^*(v)$.
%\end{itemize}
%\end{lem}
\begin{lem}\label{lem3}\cite{Nasr}
   Suppose $D_n$ is a maximal Diophantine graph of order $n$ and $u,v\in V(D_n)$. If $f(u)\mid f(v)$, then $N(u)\supseteq N(v)$, where $N(s)$ defines the neighborhood of $s$ as the set of all vertices in $D_n$ that join the vertex $s$.
\end{lem}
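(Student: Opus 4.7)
The plan is to prove the contrapositive at the level of individual neighbors: I will show that if some vertex $w$ fails to be adjacent to $u$, then it also fails to be adjacent to $v$. Equivalently, if $w \in N(v)$ and $w \neq u$, then $w \in N(u)$. The whole argument is a double application of Lemma \ref{lem1}, with the divisibility hypothesis $f(u)\mid f(v)$ used as the single bridge between the two applications.

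First I would fix a vertex $w \in N(v)$ with $w \neq u$ and suppose, toward a contradiction, that $uw \notin E(D_n)$. By Lemma \ref{lem1} applied to the non-edge $uw$, there exists a prime $p \in \mathbb{P}$ such that both $f(u), f(w) \in M_{p^{\acute{v}_p(n)}}$; in particular, $p^{\acute{v}_p(n)} \mid f(u)$ and $p^{\acute{v}_p(n)} \mid f(w)$. Now I invoke the hypothesis $f(u)\mid f(v)$: by transitivity of divisibility, $p^{\acute{v}_p(n)} \mid f(v)$. Since $f(v)\in\{1,2,\dots,n\}$, this places $f(v)$ in $M_{p^{\acute{v}_p(n)}}$ as well.

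Thus $f(v), f(w) \in M_{p^{\acute{v}_p(n)}}$, and applying Lemma \ref{lem1} in the reverse direction to the pair $\{v,w\}$ forces $vw\notin E(D_n)$, contradicting $w \in N(v)$. Hence $uw \in E(D_n)$, i.e., $w \in N(u)$, which gives $N(v)\subseteq N(u)$ as claimed. The case $u=v$ is trivial, and the case $w=u$ is vacuous under the open-neighborhood convention.

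I do not anticipate a real obstacle here: the content of the lemma is essentially a corollary of the structural characterization of non-edges in Lemma \ref{lem1}, together with the elementary observation that the prime-power divisibility condition witnessing a non-edge is preserved upward under divisibility of labels. The only care needed is to make sure the multiple $p^{\acute{v}_p(n)}\mid f(v)$ is still at most $n$ so that $f(v)$ genuinely lies in the set $M_{p^{\acute{v}_p(n)}}$, which is immediate because $f$ maps into $\{1,2,\dots,n\}$.
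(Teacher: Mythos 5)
The paper imports this lemma from \cite{Nasr} without proof, so there is no in-paper argument to compare against; your proof is correct and is the natural deduction from Lemma \ref{lem1}: a non-edge at $u$ is witnessed by a common critical prime power $p^{\acute{v}_p(n)}$ dividing $f(u)$ and $f(w)$, the witness transfers to $f(v)$ by $f(u)\mid f(v)$ (and $f(v)\leq n$ keeps it in $M_{p^{\acute{v}_p(n)}}$), and the converse direction of Lemma \ref{lem1} then forces $vw\notin E(D_n)$. The only caveat is the case $w=u$, which is a defect of the statement rather than of your argument (e.g.\ if $f(u)=1$ then $u\in N(v)\setminus N(u)$ whenever $uv\in E(D_n)$), and your explicit restriction to $w\neq u$ is the standard and correct reading.
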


\begin{thm}\label{thm_eq-deq2}\cite{Nasr}
 Suppose $D_n$ is a maximal Diophantine graph of order $n$. Let $u,v\in V(D_n)$ such that $f(u)\mid f(v)$, $f(v)$ is not a prime power number and $f^*(u)>1$. If $\deg(u)=\deg(v)$, then $f^*(u),f^*(v)$ have the same prime factors.
\end{thm}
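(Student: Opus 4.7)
Proof plan: For each vertex $w$, let
\[
P(w) := \{p \in \mathbb{P} : p^{\acute{v}_p(n)} \mid f(w)\}.
\]
By unique factorization, $P(w)$ is precisely the set of prime divisors of $f^{*}(w)$, so the desired conclusion is equivalent to the equality $P(u) = P(v)$. The divisibility $f(u) \mid f(v)$ gives $v_p(f(u)) \leq v_p(f(v))$ for every prime $p$, hence $P(u) \subseteq P(v)$ immediately; the whole of the argument is therefore devoted to the reverse inclusion $P(v) \subseteq P(u)$.

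The first step is to upgrade Lemma~\ref{lem3} using the degree equality to the statement that $u$ and $v$ have identical ``external'' neighborhoods: for every $w \in V(D_n) \setminus \{u,v\}$, $w \in N(u)$ iff $w \in N(v)$. Lemma~\ref{lem3} yields the inclusion in one direction (after accounting for whether $u$ lies in $N(v)$ and vice versa), and since $\deg(u) = \deg(v)$, the cardinality of $N(u) \setminus \{v\}$ equals that of $N(v) \setminus \{u\}$, so the inclusion is forced to be an equality. Complementing in $V(D_n) \setminus \{u,v\}$ gives the symmetric fact: the non-neighbor sets of $u$ and $v$ also coincide on $V(D_n) \setminus \{u,v\}$.

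Now suppose, for contradiction, that there is a prime $p \in P(v) \setminus P(u)$. Since $p^{\acute{v}_p(n)} \mid f(v) \leq n$, the integer $p^{\acute{v}_p(n)}$ belongs to $\{1,2,\dots,n\}$ and is therefore the label of some vertex $w_0 \in V(D_n)$. Two easy observations rule out degenerate cases: $w_0 \neq u$ because $p \notin P(u)$ forces $p^{\acute{v}_p(n)} \nmid f(u)$; and $w_0 \neq v$ because $f(v)$ is by hypothesis not a prime power, whereas $f(w_0) = p^{\acute{v}_p(n)}$ is. By Lemma~\ref{lem1} applied to the pair $\{v,w_0\}$ with the common prime $p$, we have $vw_0 \notin E(D_n)$; by the neighborhood equality from the previous step, $uw_0 \notin E(D_n)$ as well. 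A second application of Lemma~\ref{lem1}, now to $\{u,w_0\}$, produces a prime $q$ with $q^{\acute{v}_q(n)}$ dividing both $f(u)$ and $f(w_0) = p^{\acute{v}_p(n)}$. The second divisibility forces $q = p$, hence $p \in P(u)$, contradicting the choice of $p$.

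The main obstacle is really the bookkeeping in the first step: extracting the clean ``twin on $V(D_n)\setminus\{u,v\}$'' statement from Lemma~\ref{lem3} requires splitting on whether $uv \in E(D_n)$ and carefully tracking whether each of $u,v$ belongs to the other's neighborhood. After that, the contradiction is produced by locating the single critical label $p^{\acute{v}_p(n)}$ as the ``witness''; the hypothesis that $f(v)$ is not a prime power is exactly what keeps this witness different from $v$, while $f^{*}(u) > 1$ ensures $P(u)$ (and hence the conclusion) is non-vacuous.
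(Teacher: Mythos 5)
The paper does not actually prove this theorem: it is quoted from \cite{Nasr} without proof, so there is no in-paper argument to compare against. Judged on its own, your proof is correct and complete. The identification of $P(w)=\{p:p^{\acute{v}_p(n)}\mid f(w)\}$ with the prime support of $f^{*}(w)$ is exactly right, since $v_p(f^{*}(w))>0$ iff $v_p(f(w))\geq v_p(n)+1$; the inclusion $P(u)\subseteq P(v)$ is immediate from $f(u)\mid f(v)$; and the contradiction via the witness vertex $w_0$ with $f(w_0)=p^{\acute{v}_p(n)}$ works: Lemma \ref{lem1} gives $vw_0\notin E(D_n)$, the neighborhood equality transfers this to $uw_0\notin E(D_n)$, and a second application of Lemma \ref{lem1} forces $q=p$ and hence $p\in P(u)$. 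Your checks that $w_0\neq u$ (since $p\notin P(u)$) and $w_0\neq v$ (since $f(v)$ is not a prime power) are precisely where the hypotheses enter, and they are needed because Lemma \ref{lem1} only applies to distinct vertices. One simplification you could make explicit: the bookkeeping you worry about in the first step disappears entirely here, because $f^{*}(u)>1$ means $f(u)\nmid n$, so $(f(u),f(v))=f(u)\nmid n$ and $uv\notin E(D_n)$; thus $N(u)\supseteq N(v)$ together with $\deg(u)=\deg(v)$ gives $N(u)=N(v)$ outright, with no case split on adjacency. Your closing remark that $f^{*}(u)>1$ is what keeps the conclusion ``non-vacuous'' is slightly off (if $f^{*}(u)=1$ then Theorem \ref{lem2} would force $f^{*}(v)=1$ as well, so the conclusion would still hold); its real role in your argument is the one just described.
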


\begin{cor}\label{cor1}\cite{Nasr}
  Suppose $D_n$ is a maximal Diophantine graph of order $n$ and $u,v\in V(D_n)$ such that $f(v)=tf(u)$  for some $t\geq1$. If $t\mid n$ and $(t, f(u))=1$, then $\deg(u)=\deg(v)$.
\end{cor}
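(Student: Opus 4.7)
The plan is to use Lemma \ref{lem1} as the characterization of non-edges in $D_n$ and push the hypothesis $(t,f(u))=1$ with $t\mid n$ through a $p$-adic valuation argument to transfer non-neighbors of $v$ to non-neighbors of $u$.

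Since $f(v)=tf(u)$, in particular $f(u)\mid f(v)$, so Lemma \ref{lem3} already gives $N(u)\supseteq N(v)$, which, up to accounting for the possible edge $uv$ itself, yields $\deg(v)\le\deg(u)$. For the reverse inequality I would prove the matching inclusion $N(u)\setminus\{v\}\subseteq N(v)\setminus\{u\}$ directly. Let $w\in N(u)$ with $w\ne v$ and suppose for contradiction that $wv\notin E(D_n)$. By Lemma \ref{lem1} there exists a prime $p$ such that
$$p^{\acute{v}_p(n)}\mid f(w) \quad\text{and}\quad p^{\acute{v}_p(n)}\mid f(v)=t\,f(u).$$

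Because $t\mid n$, one has $v_p(t)\le v_p(n)<\acute{v}_p(n)$, so the factor $t$ alone cannot absorb the whole prime power $p^{\acute{v}_p(n)}$; hence $p\mid f(u)$. But $(t,f(u))=1$ forces $v_p(t)=0$, and therefore $v_p(f(v))=v_p(t)+v_p(f(u))=v_p(f(u))$. Consequently $p^{\acute{v}_p(n)}\mid f(u)$ as well. Combined with $p^{\acute{v}_p(n)}\mid f(w)$, Lemma \ref{lem1} then gives $uw\notin E(D_n)$, contradicting $w\in N(u)$. Thus every $w\in N(u)\setminus\{v\}$ lies in $N(v)$, i.e.\ $N(u)\setminus\{v\}\subseteq N(v)\setminus\{u\}$.

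Combining the two inclusions gives $N(u)\setminus\{v\}=N(v)\setminus\{u\}$, and since $v\in N(u)\Longleftrightarrow uv\in E(D_n)\Longleftrightarrow u\in N(v)$, the two cardinalities agree, yielding $\deg(u)=\deg(v)$. The only real obstacle is the $p$-adic bookkeeping in the contradiction step — one must split on whether $p$ divides $t$ or $f(u)$ and use both $t\mid n$ (to bound $v_p(t)$) and $(t,f(u))=1$ (to force $v_p(t)=0$ when $p\mid f(u)$); the rest is a routine double-inclusion with the minor cosmetic care that $u$ and $v$ themselves are excluded from their own neighborhoods.
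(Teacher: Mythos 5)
Your proof is correct, but note that the paper itself gives no proof of this corollary --- it is quoted from the reference \cite{Nasr} --- so there is nothing in-text to compare against. Your double inclusion is sound: the containment $N(v)\setminus\{u\}\subseteq N(u)\setminus\{v\}$ follows from $f(u)\mid f(v)$ via Lemma \ref{lem3} (or directly from Lemma \ref{lem1}), and your converse inclusion is the genuinely new step. The $p$-adic bookkeeping there is right: from $p^{\acute{v}_p(n)}\mid f(v)=tf(u)$ and $v_p(t)\le v_p(n)<\acute{v}_p(n)$ you correctly force $p\mid f(u)$, then $(t,f(u))=1$ kills $v_p(t)$, so $p^{\acute{v}_p(n)}\mid f(u)$ and Lemma \ref{lem1} yields the contradiction; the indicator argument for the excluded vertices $u,v$ is also handled properly. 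A shorter route, and likely the one intended in \cite{Nasr}, is to check that the hypotheses give $f^*(u)=f^*(v)$ --- for $p\mid t$ both reduced valuations vanish since $p\nmid f(u)$ and $v_p(t)\le v_p(n)$, and for $p\nmid t$ the valuations of $f(u)$ and $f(v)$ coincide --- after which Theorem \ref{deg(u)} gives $\deg(u)=\deg(v)$ immediately; your argument has the advantage of being self-contained in Lemmas \ref{lem1} and \ref{lem3} rather than resting on the full degree formula.
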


%\begin{dfn}\rm{\cite{Robert}
%  For every $p\in\mathbb{P}$, the $p$-adic valuation $v_p:\Z^+\rightarrow\N$ is defined by $v_p(n):= t$, where $p^t\mid n$, $p^{t+1}\nmid n$, where $\Z$ is the set of positive integers, $\N=\Z^+\cup\{0\}$ and $\mathbb{P}$ is the set of prime numbers.}
%\end{dfn}

%%%%%%%%%%%%%%%%%%%%%%%%%%%%%%%%%%%%%%%%%%%%%%%%%%%%%%%%%%%%%%%%%%%%%%%%%%%
%%%%%%%%%%%%%%%%%%%%%%%%%%%%%%%%%%%%%%%%%%%%%%%%%%%%%%%%%%%%%%%%%%%%%%%%%%%
\section{$\gamma$-Labelings of Graphs }
%%%%%%%%%%%%%%%%%%%%%%%%%%%%%%%%%%%%%%%%%%%%%%%%%%%%%%%%%%%%%%%%%%%%%%%%%%%
%%%%%%%%%%%%%%%%%%%%%%%%%%%%%%%%%%%%%%%%%%%%%%%%%%%%%%%%%%%%%%%%%%%%%%%%%%%

\hspace{0.cm}The following definition is a generalization of Definition \ref{dfn2}.
\begin{dfn}\label{dfn3}
  Let $G$ be a graph with $n$ vertices. The graph $G$ is called an $\gamma$-labeled graph if there exists a bijective map $f:V\rightarrow \{x_1, x_2, \dots, x_n\}$ such that $f(u),f(v)$ satisfy some conditions, where $\{x_1, x_2, \dots, x_n\}$ is any set of $n$ elements. Such a map $f$ is called an $\gamma$-labeling. A maximal $\gamma$-labeled graph with $n$ vertices, denoted by $(\Gamma_n,f)$, is a $\gamma$-labeled graph in which for all $uv\notin E(\Gamma_n)$, $\Gamma_n+\{uv\}$ is not a $\gamma$-labeled graph.
\end{dfn}

The reader should not be confused  the notion of $\gamma$-labeling as provided in Definition \ref{dfn3} with the concept of $\alpha$-valuation that presented in the seminal work of A. Rosa \cite{Rosa}.

%\begin{dfn}\cite{Bickle}
 % Two graphs $G_1,G_2$ are said to be isomorphic, denoted by $G_1\cong G_2$, if there exists a bijective map $\varphi: V(G_1)\rightarrow V(G_2)$ such that for all $u,v\in V(G_1)$, $uv\in E(G_1)$ if and only if $\varphi(u)\varphi(v)\in E(G_2)$.
%\end{dfn}

\begin{dfn}\cite{S-C-L}
  Let $(G_1,f_1),(G_2,f_2)$ be two labeled graphs, where $f_1:V(G_1)\rightarrow \{x_1, x_2, \dots, x_n\}$ and $f_2:V(G_2)\rightarrow \{x_1, x_2, \dots, x_n\}$ are two bijective maps. The labeled graphs $(G_1,f_1),(G_2,f_2)$ are said to be labeling isomorphic, denoted by $(G_1,f_1)\cong_l (G_2,f_2)$, if there exists a bijective map $\varphi:V(G_1)\rightarrow V(G_2)$ such that for all $u,v\in V(G_1)$, $uv\in E(G_1)$ if and only if $\varphi(u)\varphi(v)\in E(G_2)$ and $f_1(u)=\big(f_2\circ\varphi\big)(u).$
\end{dfn}

\begin{thm}\label{thm-equivalance}
  A maximal $\gamma$-labeled graph $\Gamma_n$ is unique up to labeling isomorphism.
\end{thm}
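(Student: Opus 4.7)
The plan is to show that the edge set of any maximal $\gamma$-labeled graph is entirely determined by its labeling, after which the isomorphism map is forced.

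First I would establish the key reformulation: in a maximal $\gamma$-labeled graph $(\Gamma_n,f)$, we have $uv\in E(\Gamma_n)$ if and only if the pair $(f(u),f(v))$ satisfies the $\gamma$-condition. The forward implication is immediate from Definition \ref{dfn3}, which requires every adjacent pair of labels to satisfy the condition. For the reverse implication, suppose $(f(u),f(v))$ satisfies the $\gamma$-condition but $uv\notin E(\Gamma_n)$. Then the graph $\Gamma_n+\{uv\}$ together with the same labeling $f$ still has every edge satisfying the condition, so it is a $\gamma$-labeled graph, contradicting the maximality clause of Definition \ref{dfn3}.

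Next, given two maximal $\gamma$-labeled graphs $(\Gamma_n^{(1)},f_1)$ and $(\Gamma_n^{(2)},f_2)$ with the common label set $\{x_1,\dots,x_n\}$, I would define
\[
\varphi:V(\Gamma_n^{(1)})\to V(\Gamma_n^{(2)}),\qquad \varphi:=f_2^{-1}\circ f_1.
\]
Since $f_1$ and $f_2$ are bijections onto the same set, $\varphi$ is a bijection, and the identity $f_1(u)=f_2(\varphi(u))$ holds by construction. This handles the label-preserving part of the labeling isomorphism.

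Finally, I would verify edge preservation by chaining the key reformulation for both graphs: $uv\in E(\Gamma_n^{(1)})$ iff $(f_1(u),f_1(v))$ satisfies the $\gamma$-condition, iff $(f_2(\varphi(u)),f_2(\varphi(v)))$ satisfies the condition (same pair of labels, since $f_1=f_2\circ\varphi$), iff $\varphi(u)\varphi(v)\in E(\Gamma_n^{(2)})$. Hence $(\Gamma_n^{(1)},f_1)\cong_l(\Gamma_n^{(2)},f_2)$. The only genuinely subtle point is the first step: one must read the maximality condition in Definition \ref{dfn3} as "adding any non-edge with the same labeling violates the $\gamma$-condition," so that edges correspond bijectively to label pairs satisfying the condition; once this is pinned down, the rest is formal. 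I expect this conceptual step, rather than any calculation, to be the main obstacle.
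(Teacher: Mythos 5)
Your proposal is correct and follows essentially the same route as the paper: both define $\varphi=f_2^{-1}\circ f_1$ and transfer adjacency through the condition on labels. The only difference is that you isolate explicitly the reformulation ``$uv\in E(\Gamma_n)$ iff $(f(u),f(v))$ satisfies the condition,'' justified by maximality, whereas the paper uses this fact implicitly in the step from ``the labels satisfy $C$'' to ``$\varphi(u)\varphi(v)\in E(\acute{\Gamma}_n)$''; making it explicit is a mild improvement, not a different argument.
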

\begin{proof}
  Suppose $(\Gamma_n,f_1)$ and $(\acute{\Gamma}_n,f_2)$ are two maximal $\gamma$-labeled graphs of order $n$, where the two maps 
  $$f_1:V(\Gamma_n)\rightarrow \{x_1, x_2, \dots, x_n\}\quad \mbox{and}\quad f_2:V(\acute{\Gamma}_n)\rightarrow \{x_1, x_2, \dots, x_n\}$$
  are $\gamma$-labelings of $\Gamma_n$ and $\acute{\Gamma}_n$ satisfying certain conditions, say condition $C$. Define a map 
  $$\varphi:V(\Gamma_n)\rightarrow V(\acute{\Gamma}_n)\quad \mbox{by}\quad \varphi(u)=f_2^{-1}(f_1(u)).$$ 
  %Then, the map $\varphi$ is well defined (for, let $u,v\in V(\Gamma_n)$, $u=v$. Then we have $f_1(u)=f_1(v)$. Therefore, $f_2^{-1}(f_1(u))=f_2^{-1}(f_1(v))$. Consequently, $\varphi(u)=\varphi(v)$), 
  Therefore, $\varphi$ is one to one (for let $u,v\in V(\Gamma_n)$, $\varphi(u)=\varphi(v)$. Then we obtain $f_2^{-1}(f_1(u))=f_2^{-1}(f_1(v))$; accordingly, $f_1(u)=f_1(v)$. Consequently, $u=v$), $\varphi$ is onto (since $\varphi$ is one to one and $|V(\Gamma_n)|=|V(\acute{\Gamma}_n)|=n$), $\varphi$ is preserving the adjacency and non-adjacency of $\Gamma_n$ and $\acute{\Gamma}_n$ (for the reason that let $u,v\in V(\Gamma_n)$ such that $uv\in E(\Gamma_n)$. Then we have the two labels $f_1(u),f_1(v)$ satisfy $C$. Since, $f_1(u)=f_2(\varphi(u))$ and $f_1(v)=f_2(\varphi(v))$ (see Figure \ref{fig.}), we get $f_2(\varphi(u)),f_2(\varphi(v))$ satisfy $C$. Consequently, $\varphi(u)\varphi(v)\in E(\acute{\Gamma}_n)$ and the converse is similar) and let $u\in V(\Gamma_n)$, $\varphi(u)=f_2^{-1}(f_1(u))$. Therefore, $f_1(u)=f_2(\varphi(u))=(f_2\circ\varphi)(u)$. Hence, the two graphs $(\Gamma_n,f_1)$ and $(\acute{\Gamma}_n,f_2)$ are labeling isomorphic.
\end{proof}

\begin{figure*}[h!]
\centering
  \begin{tikzpicture}
   [scale=.8,auto=center]
  \node (v)  at (0,1.33) {$\equiv$};
  \node (v1) at (0,0)  {$\{x_1, x_2, \dots, x_n\}$};
  \node (v2) at (-2,2) {$V(\Gamma_n)$};
  \node (v3) at (2,2)  {$V(\acute{\Gamma}_n)$};
% these are the straight lines from one vertex to another
  \path[->] (v2)edge [align=left, below] node  {$f_1$} (v1);
  \path[->] (v3)edge [align=left, below] node  {$f_2$} (v1);
  \path[->] (v2)edge [align=left, above] node  {$\varphi$} (v3);
  \end{tikzpicture}
  \caption{$(\Gamma_n,f_1)\cong_l (\acute{\Gamma}_n,f_2)$}\label{fig.}
\end{figure*}

\begin{cor}\label{thm-equivalance1}
   The graphs $D_n$ are unique up to labeling isomorphism.
\end{cor}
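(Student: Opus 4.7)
The plan is to view a maximal Diophantine graph as the particular instance of the $\gamma$-labeling framework set up in Definition \ref{dfn3}, and then invoke Theorem \ref{thm-equivalance} directly. Concretely, I would instantiate the abstract label set $\{x_1, x_2, \dots, x_n\}$ as $\{1, 2, \dots, n\}$ and take the condition $C$ on a pair of labels $f(u), f(v)$ to be the divisibility condition $\gcd(f(u), f(v)) \mid n$. With these choices, Definition \ref{dfn3} collapses onto Definition \ref{dfn2}: a $\gamma$-labeling of $G$ is exactly a Diophantine labeling of $G$, and the maximality clauses match, since ``$\Gamma_n + \{uv\}$ is not $\gamma$-labeled for every $uv \notin E(\Gamma_n)$'' becomes ``$D_n + \{uv\}$ is not Diophantine for every $uv \notin E(D_n)$''.

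Having identified the two concepts, the argument is then a one-line application of Theorem \ref{thm-equivalance}: any two maximal $\gamma$-labeled graphs of order $n$ built from the same label set and the same condition $C$ are labeling isomorphic, so in particular any two maximal Diophantine graphs $(D_n, f_1)$ and $(\acute{D}_n, f_2)$ of order $n$ are labeling isomorphic via the map $\varphi = f_2^{-1} \circ f_1$ constructed in the proof of that theorem.

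There is essentially no obstacle in this corollary: the real content has already been carried out in Theorem \ref{thm-equivalance}, where one checks that $\varphi = f_2^{-1} \circ f_1$ is a well-defined bijection between the vertex sets and that it preserves adjacency and non-adjacency (because in a maximal $\gamma$-labeled graph the presence of an edge $uv$ is determined solely by whether the label pair satisfies $C$). The only small point worth stating explicitly in the write-up is that the Diophantine condition depends only on the values $f(u), f(v)$ and the order $n$, and not on the vertices themselves, which is exactly what is needed to let the argument of Theorem \ref{thm-equivalance} go through. Hence the corollary reduces to a short citation.
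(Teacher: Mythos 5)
Your proposal is correct and is exactly the paper's intended route: the paper states Definition \ref{dfn3} as a generalization of Definition \ref{dfn2} and presents this corollary as an immediate specialization of Theorem \ref{thm-equivalance}, with the condition $C$ taken to be $\gcd(f(u),f(v))\mid n$ on the label set $\{1,2,\dots,n\}$. Nothing further is needed.
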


\begin{thm}
  Suppose $G$ is a graph with order $n$ and $\Gamma_n$ is the maximal $\gamma$-labeled graph with order $n$. $G$ is an $\gamma$-labeled graph if and only if $G$ is labeling isomorphic to a spanning subgraph of $\Gamma_n$.
\end{thm}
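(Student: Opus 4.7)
The plan is to prove the biconditional by two implications, with the key observation that by maximality of $\Gamma_n$ (and assuming the defining condition $C$ on labels is \emph{local} --- depending only on the pair of endpoint labels, as for $(\cdot,\cdot)\mid n$ in the Diophantine case), every pair $u,v \in V(\Gamma_n)$ whose labels $f_2(u), f_2(v)$ satisfy $C$ must already be adjacent in $\Gamma_n$. Otherwise adding the edge $uv$ (with all labels unchanged) would still yield a $\gamma$-labeled graph, contradicting maximality. This is the general analogue of Lemma \ref{lem1}, and I would record it as a preliminary step before proving either direction.

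The $(\Leftarrow)$ direction then follows immediately: if $G$ is labeling isomorphic to a spanning subgraph $H$ of $\Gamma_n$ via $\psi : V(G) \to V(H)=V(\Gamma_n)$ with $f_G = f_2\circ\psi$, then for every $uv\in E(G)$ the image $\psi(u)\psi(v)$ lies in $E(H)\subseteq E(\Gamma_n)$, so $f_G(u)$ and $f_G(v)$ satisfy $C$, and $f_G$ is a $\gamma$-labeling of $G$. For $(\Rightarrow)$, suppose $G$ carries a $\gamma$-labeling $f_1 : V(G)\to\{x_1,\ldots,x_n\}$ and $\Gamma_n$ carries $f_2 : V(\Gamma_n)\to\{x_1,\ldots,x_n\}$. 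Following the construction in the proof of Theorem \ref{thm-equivalance}, define $\varphi(u)=f_2^{-1}(f_1(u))$, a bijection satisfying $f_1 = f_2\circ\varphi$. For each $uv\in E(G)$ the labels $f_2(\varphi(u))=f_1(u)$ and $f_2(\varphi(v))=f_1(v)$ satisfy $C$, so the maximality observation forces $\varphi(u)\varphi(v)\in E(\Gamma_n)$. Taking $H$ to be the spanning subgraph of $\Gamma_n$ with edge set $\{\varphi(u)\varphi(v) : uv\in E(G)\}$, the map $\varphi$ becomes a labeling isomorphism from $(G,f_1)$ to $(H,f_2)$.

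The main obstacle I anticipate is articulating the locality-of-$C$ hypothesis precisely enough to validate the maximality step in full generality --- the definition of $\gamma$-labeling in Definition \ref{dfn3} leaves the phrase ``some conditions'' informal, so one really needs to assume (or verify case by case) that $C$ is a property of unordered pairs of labels alone. Once this is pinned down, the rest of the argument is a direct diagram chase in the same spirit as the proof of Theorem \ref{thm-equivalance}.
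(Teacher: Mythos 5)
Your proof is correct, and your forward direction is organized differently from the paper's. The paper proves $(\Rightarrow)$ by forming the supergraph $G+T$, where $T$ is the set of non-edges of $G$ whose labels satisfy $C$, showing that $G+T$ is itself a maximal $\gamma$-labeled graph, and then invoking Theorem \ref{thm-equivalance} to conclude $G+T\cong_l\Gamma_n$, so that $G$ sits inside $\Gamma_n$ as a spanning subgraph. You instead isolate the saturation property of $\Gamma_n$ (any two vertices whose labels satisfy $C$ are adjacent, the general analogue of Lemma \ref{lem1}) and use it to push the map $\varphi=f_2^{-1}\circ f_1$ directly from $G$ into $\Gamma_n$, bypassing the uniqueness theorem entirely. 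The two arguments rest on the same facts --- the paper's maximality claim for $G+T$ is exactly your saturation observation in disguise, and its appeal to Theorem \ref{thm-equivalance} produces the same map $\varphi$ you write down --- but your route is shorter and makes the single load-bearing hypothesis explicit: that $C$ is a property of the unordered pair of labels alone. The paper assumes this silently throughout (Definition \ref{dfn3} leaves ``some conditions'' unformalized), so your flagging of it is a legitimate observation rather than a gap in your own argument. Your $(\Leftarrow)$ direction coincides with the paper's: restricting the labeling of $\Gamma_n$ to a spanning subgraph can only remove constraints.
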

\begin{proof}
  Suppose $\Gamma_n$ is the maximal $\gamma$-labeled graph with order $n$ and a graph $G$ is a $\gamma$-labeled graph with order $n$. Then there exists $f:V(G)\rightarrow \{x_1, x_2, \dots, x_n\}$ is a bijective map such that $f(u),f(v)$ satisfy certain conditions, say condition $C$ and define 
  $$T:=\{uv:uv\notin E(G) \ \mbox{and} \ f(u),f(v) \ \mbox{satisfy} \ C\}.$$
  Consequently, the spanning supergraph $G+T$ of $G$ is a $\gamma$-labeled graph of order $n$ and the set $E(G)\cup T$ is set of all edges such that $f(u),f(v)$ satisfy $C$. Let $\acute{u}\acute{v}\notin E(G)\cup T$. Then we have that the two labels $f(\acute{u}),f(\acute{v})$ do not satisfy  $C$. Therefore, the spanning supergraph $G+(T\cup\{\acute{u}\acute{v}\})$ of $G$ is not a $\gamma$-labeled graph with a $\gamma$-labeling satisfy $C$. Consequently, $G+T$ is the maximal $\gamma$-labeled graph of order $n$. Thus, using Theorem \ref{thm-equivalance}, we have that $G+T$ is labeling isomorphic to $\Gamma_n$. Hence, the graph $G$ is labeling isomorphic to a spanning subgraph of the maximal $\gamma$-labeled graph $\Gamma_n$.\\

  Conversely, suppose $\Gamma_n$ is the maximal $\gamma$-labeled graph with order $n$ and a graph $G$ is labeling isomorphic to a spanning subgraph of the maximal $\gamma$-labeled graph $\Gamma_n$. Let $T$ be the set of deleted edges of $\Gamma_n$ such that the graph $G$ is labeling isomorphic to $\Gamma_n-T$. Then we have
  $$|V(G)|=|V(\Gamma_n-T)|=|V(\Gamma_n)| \quad  \mbox{and} \quad V(\Gamma_n)=V(\Gamma_n-T).$$
  Therefore, using the same $\gamma$-labeling of $\Gamma_n$, we have $\Gamma_n-T$ is a $\gamma$-labeled graph. Since the graph $G$ is labeling isomorphic to $\Gamma_n-T$, hence the graph $G$ is a $\gamma$-labeled graph.
\end{proof}

\begin{cor}\label{spanning-thm}
  A graph $G$ of order $n$ is  Diophantine if and only if $G$ is labeling isomorphic to a spanning subgraph of $D_n$.
\end{cor}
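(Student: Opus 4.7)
The plan is to obtain this corollary as a direct specialization of the preceding theorem, with essentially no new work required. I would first observe that Diophantine labeling fits exactly into the $\gamma$-labeling framework of Definition \ref{dfn3}: take the target set $\{x_1, x_2, \dots, x_n\}$ to be $\{1, 2, \dots, n\}$, and let the condition $C$ on a pair $(f(u), f(v))$ be the relation $\gcd(f(u), f(v)) \mid n$, applied to every adjacent pair $uv \in E(G)$.

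Next, I would match the definitions of maximality. By Definition \ref{dfn2}, a maximal Diophantine graph $D_n$ is a Diophantine graph to which no edge can be added without destroying the Diophantine property; by Definition \ref{dfn3}, under the instantiation above, the maximal $\gamma$-labeled graph $\Gamma_n$ has the identical defining property. Hence $\Gamma_n = D_n$ as labeled graphs, and in particular $D_n$ inherits uniqueness up to labeling isomorphism (Corollary \ref{thm-equivalance1}).

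Then I would invoke the preceding theorem verbatim with this choice of $\gamma$. Its statement, specialized, reads: a graph $G$ of order $n$ admits a bijection $f : V(G) \to \{1,2,\dots,n\}$ with $\gcd(f(u),f(v)) \mid n$ for all $uv \in E(G)$ if and only if $G$ is labeling isomorphic to a spanning subgraph of $D_n$. Since this is precisely the statement that $G$ is Diophantine if and only if $G$ is labeling isomorphic to a spanning subgraph of $D_n$, the corollary follows immediately.

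There is no substantive obstacle here; the only thing to be careful about is ensuring that the two notions of maximality genuinely coincide and that the target set used in the $\gamma$-labeling framework is exactly $\{1,2,\dots,n\}$, so that the bijection condition in Definition \ref{dfn2} is recovered rather than some weaker surjection. Once this is checked, the corollary is an immediate instantiation of the general theorem.
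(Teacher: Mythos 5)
Your proposal is correct and matches the paper's intent exactly: the corollary is stated without a separate proof precisely because it is the specialization of the preceding theorem to the $\gamma$-labeling with target set $\{1,2,\dots,n\}$ and condition $\gcd(f(u),f(v))\mid n$, under which $\Gamma_n$ is $D_n$. Your added care about matching the two notions of maximality and the target set is sensible but introduces nothing beyond what the paper implicitly assumes.
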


%\begin{thm}\rm{\label{thm0}
% For all $n>1$, $D_n\cong_l R_n$ if and only if $n$ is a prime number.}
%\end{thm}
%\begin{proof}
%Let $D_n\cong_l R_n$, where $n>1$ and let $f:V(D_n)\rightarrow \{1, 2, \dots, n\}$ be a Diophantine labeling and $g:V(R_n)\rightarrow \{1, 2, \dots, n\}$ be a prime labeling. Then there exists a bijective map $\phi:V(D_n)\rightarrow V(R_n)$ such that for every $u,v\in V(D_n)$, $uv\in E(D_n)$ if and only if $\phi(u)\phi(v)\in E(R_n)$ and $\big(g\circ\phi\big)(u)=f(u)$, so we have $(f(u), f(v))\mid n$ if and only if $(g(\phi(u)), g(\phi(v)))=1$. Therefore,
%\begin{equation}\label{eqn3}
%(f(u), f(v))\mid n \ \mbox{if and only if} \ (f(u), f(v))=1.
%\end{equation}
%Suppose by contradiction that $n$ is not a prime number, so we have that there are a prime number $p$ such that $p\mid n$, so $p\leq \frac{n}{2}$. Let $f(u),f(v)$ be the labels of $u,v\in V(D_n)$ such that $f(u)=p<n$ and $f(v)=2p\leq n$. Then $1\neq (f(u),f(v))=p\mid n$, this contradicts \ref{eqn3}. Hence, $n$ is a prime number.\\
%
%Conversely, let $n$ be a prime number. Since $D_n$ is the maximal Diophantine graph of $n$ vertices. Therefore, $D_n$ is a prime graph. If one adds an edge in the maximal Diophantine graph $D_n$, then one can see that the new graph $G$ is not a Diophantine graph as well as not prime graph, since $n$ is a prime number. Therefore, $D_n$ is the maximal prime graph of $n$ vertices. Hence, $D_n\cong_l R_n$.
%\end{proof}

%%%%%%%%%%%%%%%%%%%%%%%%%%%%%%%%%%%%%%%%%%%%%%%%%%%%%%%%%%%%%%%%%%%%%%%%%%%
%%%%%%%%%%%%%%%%%%%%%%%%%%%%%%%%%%%%%%%%%%%%%%%%%%%%%%%%%%%%%%%%%%%%%%%%%%%
\section{Basic Bounds of the Maximal Diophantine Graphs $D_n$}
%%%%%%%%%%%%%%%%%%%%%%%%%%%%%%%%%%%%%%%%%%%%%%%%%%%%%%%%%%%%%%%%%%%%%%%%%%%
%%%%%%%%%%%%%%%%%%%%%%%%%%%%%%%%%%%%%%%%%%%%%%%%%%%%%%%%%%%%%%%%%%%%%%%%%%%

%%%%%%%%%%%%%%%%%%%%%%%%%%%%%%%%%%%%%%%%%%%%%%%%%%%%%%%%%%%%%%%%%%%%%%%%%%%
%%%%%%%%%%%%%%%%%%%%%%%%%%%%%%%%%%%%%%%%%%%%%%%%%%%%%%%%%%%%%%%%%%%%%%%%%%%
\subsection{Some Necessary and Sufficient Conditions for $D_n$ }
%%%%%%%%%%%%%%%%%%%%%%%%%%%%%%%%%%%%%%%%%%%%%%%%%%%%%%%%%%%%%%%%%%%%%%%%%%%
%%%%%%%%%%%%%%%%%%%%%%%%%%%%%%%%%%%%%%%%%%%%%%%%%%%%%%%%%%%%%%%%%%%%%%%%%%%

%\begin{cor}\label{corVI1}
%  Let $G$ be a graph with order $n$ and $D_n$ be the maximal Diophantine graph of order $n$. If $|E(G)|>|E(D_n)|$, then the graph $G$ is not Diophantine.
%\end{cor}
\hspace{0.5cm} In what follows, let $(D_n,f)$ denote the maximal Diophantine graph of order $n$, with Diophantine labeling $f$ and $F(G)$ denote the number of full degree vertices of a graph $G$. The next two theorems present two different methods that compute the quantity $F(D_n)$. 
\begin{thm}\label{fulldegree2}
   If $p_i^{\acute{v}_{p_i}(n)}<\frac{n}{2}$, $i=1, 2, \dots, r$, then the number of full degree vertices in $D_n$ is given by
\begin{equation*}
  F(D_n) =n-\sum_{1\leq i\leq r}\left\lfloor\frac{n}{p_i^{\acute{v}_{p_i}(n)}}\right\rfloor
           +\sum_{1\leq i<j\leq r}\left\lfloor\frac{n}{p_i^{\acute{v}_{p_i}(n)}p_j^{\acute{v}_{p_j}(n)}}\right\rfloor
           -\dots +(-1)^{r}\left\lfloor\frac{n}{\prod\limits_{1\leq i\leq r}p_i^{\acute{v}_{p_i}(n)}}\right\rfloor,
\end{equation*}
  where $p_1, p_2, \dots, p_r$ are distinct prime numbers.
\end{thm}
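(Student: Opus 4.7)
The plan is to identify full-degree vertices in $D_n$ via their labels and then count by inclusion-exclusion. By Lemma \ref{lem1}, a vertex $u$ fails to be adjacent to some other vertex $v$ exactly when there exists a prime $p$ with $f(u),f(v)\in M_{p^{\acute v_p(n)}}$. Hence $u$ has full degree if and only if, for every prime $p$ for which $|M_{p^{\acute v_p(n)}}|\ge 2$, the label $f(u)$ lies outside $M_{p^{\acute v_p(n)}}$. Since $\big|M_{p^{\acute v_p(n)}}\big|=\lfloor n/p^{\acute v_p(n)}\rfloor$, the condition $|M_{p^{\acute v_p(n)}}|\ge 2$ is equivalent to $p^{\acute v_p(n)}\le n/2$.

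Next I would observe that the equality $p^{\acute v_p(n)}=n/2$ cannot occur: otherwise $n=2p^{\acute v_p(n)}$ would force $p^{\acute v_p(n)}\mid n$, contradicting the definition $\acute v_p(n)=v_p(n)+1$. Therefore the relevant primes are precisely those with $p^{\acute v_p(n)}<n/2$, namely $p_1,\dots,p_r$ by hypothesis. Consequently, writing $q_i:=p_i^{\acute v_{p_i}(n)}$, the vertices \emph{not} of full degree are exactly those whose label belongs to $\bigcup_{i=1}^{r}M_{q_i}$, and so
\[
F(D_n)=n-\Bigl|\bigcup_{i=1}^{r}M_{q_i}\Bigr|.
\]

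To evaluate this union, I would apply inclusion-exclusion. The key arithmetic fact is that since $p_1,\dots,p_r$ are distinct primes, the prime-powers $q_1,\dots,q_r$ are pairwise coprime, so for any index set $I\subseteq\{1,\dots,r\}$,
\[
\bigcap_{i\in I}M_{q_i}=\Bigl\{k\cdot\prod_{i\in I}q_i:\ k=1,2,\dots,\Bigl\lfloor\tfrac{n}{\prod_{i\in I}q_i}\Bigr\rfloor\Bigr\},
\]
a set of cardinality $\bigl\lfloor n/\prod_{i\in I}q_i\bigr\rfloor$. Substituting into the inclusion-exclusion expansion gives the claimed alternating sum.

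The main (minor) obstacle is the edge case analysis: verifying that primes with $p^{\acute v_p(n)}>n/2$ contribute no non-edges and ruling out $p^{\acute v_p(n)}=n/2$; the rest of the argument is a clean reduction to counting integers in $\{1,\dots,n\}$ divisible by at least one of the pairwise coprime integers $q_1,\dots,q_r$, which is the standard inclusion-exclusion computation.
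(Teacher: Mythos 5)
Your proof is correct and follows essentially the route the paper indicates: it characterizes the non-full-degree vertices via Lemma \ref{lem1} as exactly those whose labels lie in $\bigcup_i M_{p_i^{\acute{v}_{p_i}(n)}}$ and then counts by inclusion--exclusion, using that the $p_i^{\acute{v}_{p_i}(n)}$ are pairwise coprime. The only (harmless) deviation is that you derive the full-degree criterion directly from Lemma \ref{lem1} rather than quoting Theorem \ref{lem2}, and your handling of the edge cases $p^{\acute{v}_p(n)}=n/2$ and $p^{\acute{v}_p(n)}>n/2$ is a welcome bit of rigor the paper leaves implicit.
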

The proof of Theorem \ref{fulldegree2} is straightforward by applying Lemma \ref{lem1}, Theorem \ref{lem2} and the inclusion-exclusion principle (see \cite{Rosen2}). %In a very large $n$, we can not get an efficient upper and lower bound for this number $F(D_n)$ from the above formula. So, there is an another way to obtain the number $F(D_n)$ which is effective. For finding this formula, we define a new arithmetic function
For a very large $n\in \Z^+$, the above formula does not provide efficient upper and lower bounds for the quantity $F(D_n)$. There is an alternative approach to determine the quantity $F(D_n)$ by using the following arithmetic function
$$\gamma_x(n):=\left|\left\{p^{\acute{v}_p(n)}: p\mid n, \ x<p^{\acute{v}_p(n)}<n, \ p\in\mathbb{P}\right\}\right|,$$
where $n\in \Z^+$ and a positive real number $x<n$. This function is utilized for computing not only the number of vertices with full degree in $D_n$ but also the order of the maximal clique of $D_n$ as follows in Theorems \ref{fulldegree}, \ref{complete_subgraph}. Obviously, for every $n\in \Z^+$, $\gamma_1(n)\leq\omega(n)$, for every $p\in\mathbb{P}$, $k\in \Z^+$ and a positive real number $x<n$, $\gamma_x\left(p^k\right)=0$ and also, for every $n,m\in\Z^+$ with $m<n$, $\gamma_m(n)=\gamma_1(n)-\gamma_1(m)$. 

\begin{thm} \label{fulldegree}
  The number of vertices with full degree in $D_n$ is given by
\begin{equation*}
F(D_n)=\tau(n) + \pi(n-1)-\pi\left(\frac{n}{2}\right) + \gamma_{\frac{n}{2}}(n).
\end{equation*}
In particular, if $n$ is a prime number, we have 
$$F(D_n)=\pi(n)-\pi\left(\frac{n}{2}\right) +1.$$
\end{thm}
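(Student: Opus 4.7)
The plan is to count full-degree vertices directly using the classification in Theorem \ref{lem2}, which says that $\deg(u) = n-1$ in $D_n$ if and only if $f(u) \mid n$ \textbf{or} $n/2 < f(u) = p^{\acute{v}_p(n)} < n$ for some prime $p$ (exclusive or). Since $f:V(D_n) \to \{1,2,\dots,n\}$ is a bijection, it suffices to count the integers in $\{1,\dots,n\}$ falling in each of the two disjoint classes.

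First I would handle the easy class: the number of $m \in \{1,\dots,n\}$ with $m \mid n$ is, by definition, $\tau(n)$. Next I would treat the second class, namely those $m \in \{1,\dots,n\}$ for which $m = p^{\acute{v}_p(n)}$ for some prime $p$ and $n/2 < m < n$. I would split this count according to whether $p \mid n$ or not.
\begin{itemize}
\item If $p \mid n$, then by the very definition of $\gamma_x$, the number of admissible values is $\gamma_{n/2}(n)$.
\item If $p \nmid n$, then $\acute{v}_p(n) = 1$, so $m = p$, and we must count primes $p$ with $n/2 < p < n$. Here I would insert the short observation that every such prime automatically fails to divide $n$: indeed, if $p \mid n$ and $n/2 < p < n$, then $n = kp$ for some $k \geq 2$, forcing $k = 2$ and $p = n/2$, contradicting $p > n/2$. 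Hence the count is simply $\pi(n-1) - \pi(n/2)$.
\end{itemize}

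Then I would verify that the two classes are disjoint, so that the exclusive ``or'' of Theorem \ref{lem2} coincides with the inclusive one for counting purposes. This is immediate: if $m \mid n$ and simultaneously $m = p^{\acute{v}_p(n)} = p^{v_p(n)+1}$, then $p^{v_p(n)+1} \mid n$, contradicting the definition of $v_p(n)$. Summing the contributions yields
\begin{equation*}
F(D_n) = \tau(n) + \pi(n-1) - \pi(n/2) + \gamma_{n/2}(n),
\end{equation*}
which is the main formula.

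Finally, for the ``In particular'' clause, I would specialize to $n$ prime: then $\tau(n) = 2$, and $\gamma_{n/2}(n) = 0$ because the only prime dividing $n$ is $n$ itself, for which $p^{\acute{v}_p(n)} = n^2 > n$ lies outside the required interval. Using $\pi(n-1) = \pi(n) - 1$ (valid since $n$ is prime), the formula collapses to $\pi(n) - \pi(n/2) + 1$. The main (though minor) obstacle is the disjointness check and the observation about primes in $(n/2,n)$ being coprime to $n$; everything else is bookkeeping once Theorem \ref{lem2} is in hand.
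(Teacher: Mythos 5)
Your proposal is correct and follows essentially the same route as the paper: both apply Theorem \ref{lem2} and partition the full-degree labels into divisors of $n$, primes in $\left(\frac{n}{2},n\right)$, and critical prime powers $p^{\acute{v}_p(n)}$ with $p\mid n$ in that interval, yielding $\tau(n)$, $\pi(n-1)-\pi\left(\frac{n}{2}\right)$ and $\gamma_{\frac{n}{2}}(n)$ respectively. Your explicit disjointness checks are a slightly more careful rendering of what the paper asserts as "clearly mutually disjoint," but the argument is the same.
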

\begin{proof}
  Let $D_n$ be the maximal Diophantine graph with order $n$. Define the following three sets
\begin{equation*}
     S_1:=\{d\in \Z^+ : d\mid n\}, \quad
     S_2:=\left\{p\in\mathbb{P}: \frac{n}{2} < p < n\right\}, \quad
     S_3:=\left\{ p^{\acute{v}_p(n)} : p\mid n, \ \frac{n}{2}< p^{\acute{v}_p(n)} < n, \ p\in\mathbb{P} \right\}.
\end{equation*}
Consequently, using Theorem \ref{lem2}, one can see that $ S_1\cup S_2\cup S_3$ is the set of labels of the full degree vertices in $D_n.$  Clearly, $S_1,S_2$ and $S_3$ are mutually disjoint sets and

  $$|S_1|=\tau(n),\quad |S_2|=\pi(n-1)-\pi\left(\frac{n}{2}\right)\quad \mbox{and}\quad |S_3|=\gamma_{\frac{n}{2}}(n),$$
  % if any vertex $u$ has label $f(u)\notin S_1\cup S_2\cup S_3$, then its degree is not a full degree. Therefore, the set $S_1\cup S_2\cup S_3$ has all labels of the corresponding vertices with full degree in $D_n$. 
  % Case 1: If $\ell_1,l_2\in S_1$ are two labels, then $(l_1, l_2)\mid n$ for $l_1,l_2$ are divisors of $n$.\\
  % Case 2: If $l_1,l_2\in S_2$ are two labels, then $(l_1, l_2)=1\mid n$ for $l_1,l_2$ are different prime numbers.\\
  % Case 3: If $l_1,l_2\in S_3$ are two labels, then $(l_1, l_2)=1\mid n$ for $l_1,l_2$ are different critical prime power numbers.\\
  % Case 4: If $l_1\in S_1$ and $l_2\in S_2\cup S_3$ are two labels, then $(l_1, l_2)=p^{a}\mid n$, where $0\leq a<\acute{v}_p(n)$.\\
  % Case 5: Let $l_2\in S_2$ and $l_3\in S_3$ be two labels. Then $l_2=q$ and $l_3=p^{\acute{v}_p(n)}$, where $p,q\in\mathbb{P}$, $p\mid n$ and $\frac{n}{2}<p^{\acute{v}_p(n)}<n$. Consequently, $(q, p^{\acute{v}_p(n)})=q\mid n$ if $p=q$ and $(q, p^{\acute{v}_p(n)})=1\mid n$ if $p\neq q$.
and hence 
$$F(D_n)= \tau(n) + \pi(n-1)-\pi\left(\frac{n}{2}\right)  + \gamma_{\frac{n}{2}}(n).$$
In case of $n$ is a prime number, we have $F(D_n)= \pi(n)-\pi\left(\frac{n}{2}\right)+1$.
\end{proof}

 %One can see that $\gamma_{\frac{n}{2}}\left(n \right)=\gamma_1(n)-\gamma_1\left(\frac{n}{2} \right)$ and $\gamma_1(n)=\sum\limits_{i=1}^{n-2}\gamma_1\left(i+2\right)-\gamma_1\left(i+1\right).$

\begin{cor}\label{corVI2}
  Let $G$ be a graph with order $n$. If the graph $G$ is Diophantine, then $F(G)\leq F(D_n)$.
\end{cor}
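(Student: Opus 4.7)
The plan is to reduce the statement to a structural comparison between $G$ and $D_n$ using Corollary \ref{spanning-thm}, which identifies the Diophantine graphs on $n$ vertices with (labeling copies of) spanning subgraphs of $D_n$. Since the inequality $F(G)\leq F(D_n)$ compares vertex counts and labeling isomorphism preserves degrees, I can immediately pass from $G$ to a spanning subgraph $H$ of $D_n$ with $F(G)=F(H)$ and work inside $D_n$.

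First I would invoke Corollary \ref{spanning-thm} to obtain a spanning subgraph $H$ of $D_n$ that is labeling isomorphic to $G$; under this labeling isomorphism every vertex keeps its degree, so $F(G)=F(H)$. Second, since $H$ and $D_n$ share the same vertex set and $E(H)\subseteq E(D_n)$, one has $\deg_H(v)\leq \deg_{D_n}(v)$ for every vertex $v$. Third, if $v$ is a full degree vertex of $H$, then $\deg_H(v)=n-1$, which forces $\deg_{D_n}(v)=n-1$ (degrees in a graph of order $n$ cannot exceed $n-1$), so $v$ is also a full degree vertex of $D_n$. Consequently the set of full degree vertices of $H$ is contained in the set of full degree vertices of $D_n$, yielding $F(H)\leq F(D_n)$ and hence $F(G)\leq F(D_n)$.

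There is essentially no technical obstacle here, which is appropriate for a corollary: once Corollary \ref{spanning-thm} is in hand, the argument is just a monotonicity observation that adding edges can only increase degrees, together with the trivial fact that a degree in an $n$-vertex graph is capped at $n-1$. The only point worth articulating carefully is that the labeling isomorphism $\varphi$ preserves adjacency in both directions (by definition), hence preserves vertex degrees, so the count $F(\cdot)$ transports across $\varphi$ without change.
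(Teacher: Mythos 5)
Your proof is correct and follows exactly the route the paper intends: it leaves Corollary \ref{corVI2} unproved, but the companion result Corollary \ref{thmVI1} ($\delta(G)\leq\delta(D_n)$) is proved in precisely this way, by passing through Corollary \ref{spanning-thm} to a spanning subgraph of $D_n$. Your additional observation that a vertex of degree $n-1$ in the spanning subgraph must still have degree $n-1$ in $D_n$ is the right (and only) extra step needed.
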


%\begin{cor}
%  Let $D_n$ be the maximal Diophantine graph of order $n$. Then 
%  $$|E(D_n)|\geq \frac{1}{2}F(D_n)\big(F(D_n)-1\big).$$
%\end{cor}

The clique number, denoted by $Cl(G)$, is the order of the maximal clique of a graph $G$. Although $\omega(G)$ is the standard notation of the clique number, we have chosen $Cl(G)$ in this study to prevent confusion with the arithmetic function $\omega(n)$. The following theorem gives the order of the maximal clique in $D_n$.

%It is well known that the clique number is usually denoted by $\omega(G)$. However, we did not follow this in this literature to avoid any confusion with the arithmetic function $\omega(n)$. 
\begin{thm}\label{complete_subgraph} 
The clique number of $D_n$ is given by  
$$Cl(D_n)= \tau(n) + \pi(n) - \omega(n) + \gamma_1(n).$$ 
In particular, if $n$ is a prime number, we have
$$Cl(D_n)=\pi(n)+1.$$
\end{thm}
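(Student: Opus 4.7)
The plan is to exhibit a clique of the claimed size and then match it with an upper bound via Lemma \ref{lem1}. For the construction, I would take the clique $K^*$ whose labels form the disjoint union of
\[
S_1 = \{d \in \Z^+ : d \mid n\}, \quad S_2 = \{p \in \mathbb{P} : p \leq n, \ p \nmid n\}, \quad S_3 = \{p^{\acute{v}_p(n)} : p \mid n, \ p^{\acute{v}_p(n)} < n\},
\]
of sizes $\tau(n)$, $\pi(n)-\omega(n)$, and $\gamma_1(n)$ respectively. Disjointness is immediate (elements of $S_2$ have exponent $1$, elements of $S_3$ have exponent $\geq 2$ in their prime base, and none of $S_2$ or $S_3$ divides $n$), and the equality $p^{\acute{v}_p(n)} = n$ is impossible since it would force $v_p(n) = v_p(n)+1$. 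To confirm $K^*$ is a clique, I would invoke Lemma \ref{lem1}: for any $q \in \mathbb{P}$ and any divisor $d$ of $n$, $v_q(d) \leq v_q(n) < \acute{v}_q(n)$, so $d \notin M_{q^{\acute{v}_q(n)}}$; and for any two ``critical prime powers'' $p^{\acute{v}_p(n)}$ and $(p')^{\acute{v}_{p'}(n)}$ with $p \neq p'$, each lies in $M_{q^{\acute{v}_q(n)}}$ only when $q$ equals its own prime base, so they share no $M$-set.

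For the matching upper bound, let $K$ be any clique in $D_n$ and split $K = K_1 \cup K_2$ according to whether $f(v) \mid n$. Trivially $|K_1| \leq \tau(n)$. For each $v \in K_2$, since $f(v) \nmid n$, some prime $p$ satisfies $v_p(f(v)) > v_p(n)$, hence $p^{\acute{v}_p(n)} \mid f(v)$ and (as $f(v) \leq n$) $p^{\acute{v}_p(n)} \leq n$. Let $P(v)$ be the resulting nonempty set of such primes. By Lemma \ref{lem1}, for distinct $v, v' \in K_2$ the sets $P(v), P(v')$ must be disjoint---otherwise $v, v'$ would both lie in a common $M_{p^{\acute{v}_p(n)}}$ and be non-adjacent---so the $P(v)$'s form disjoint nonempty subsets of $Q := \{p \in \mathbb{P} : p^{\acute{v}_p(n)} \leq n\}$, yielding $|K_2| \leq |Q|$.

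It remains to compute $|Q|$ by splitting on whether $p \mid n$: if $p \nmid n$ then $p^{\acute{v}_p(n)} = p$, contributing whenever $p \leq n$ for a total of $\pi(n) - \omega(n)$ primes; if $p \mid n$, then $p^{\acute{v}_p(n)} \leq n$ is equivalent to $p^{\acute{v}_p(n)} < n$, counted precisely by $\gamma_1(n)$. Summing gives $|Q| = \pi(n) - \omega(n) + \gamma_1(n)$, which together with $|K_1| \leq \tau(n)$ matches the lower bound. The specialization for prime $n$ is immediate from $\tau(n) = 2$, $\omega(n) = 1$, and $\gamma_1(n) = 0$ (since then $n^2 > n$).

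I expect the only delicate point to be the clean reduction via Lemma \ref{lem1} that the obstructions to adjacency partition into disjoint ``per-prime slots,'' which is what converts the clique problem into the plain count of $|Q|$. Once this per-prime slot picture is in place, splitting the count into the three summands $\tau(n)$, $\pi(n)-\omega(n)$, and $\gamma_1(n)$ is direct bookkeeping, and disjointness of $S_1, S_2, S_3$ ensures we do not overcount.
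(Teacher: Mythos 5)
Your proposal is correct, and its first half (the construction) coincides with the paper's: both exhibit the clique labeled by the disjoint union of the divisors of $n$, the primes not dividing $n$, and the critical prime powers $p^{\acute{v}_p(n)}<n$ with $p\mid n$, giving the count $\tau(n)+\pi(n)-\omega(n)+\gamma_1(n)$. Where you diverge is the upper bound. The paper argues that \emph{the} maximal clique cannot contain a vertex $u$ with $f(u)\notin S_1\cup S_2\cup S_3$: from $f(u)\nmid n$ it extracts a critical prime power $\ell=p_0^{\acute{v}_{p_0}(n)}$ dividing $f(u)$ and concludes that $(f(u),\ell)=\ell\nmid n$ ``contradicts the completeness of the maximal clique'' --- an argument that tacitly assumes the vertex labeled $\ell$ belongs to that clique, i.e.\ that the maximum clique extends the constructed one. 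Your argument instead bounds an \emph{arbitrary} clique $K$: you split $K$ into labels dividing $n$ (at most $\tau(n)$ of them) and the rest, assign to each remaining vertex the nonempty set $P(v)$ of primes $p$ with $p^{\acute{v}_p(n)}\mid f(v)$, and use Lemma \ref{lem1} to force these sets to be pairwise disjoint subsets of $Q=\{p\in\mathbb{P}:p^{\acute{v}_p(n)}\leq n\}$, whence $|K|\leq\tau(n)+|Q|=\tau(n)+\pi(n)-\omega(n)+\gamma_1(n)$. This pigeonhole route is more robust: it makes no assumption about which vertices the maximum clique contains and so closes the small logical gap in the paper's contradiction step, at the cost of not identifying the maximum clique explicitly as $S_1\cup S_2\cup S_3$ (which the paper's version does deliver). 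Your auxiliary checks --- that $p^{\acute{v}_p(n)}=n$ is impossible, that divisors of $n$ lie in no $M_{q^{\acute{v}_q(n)}}$, and the evaluation $\tau(n)=2$, $\omega(n)=1$, $\gamma_1(n)=0$ for prime $n$ --- are all sound.
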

\begin{proof}
  Let $D_n$ be the maximal Diophantine graph with order $n$. Define the following three sets
\begin{equation*}
     S_1:=\{d\in \Z^+ : d\mid n\}, \quad
     S_2:=\{p\in\mathbb{P}: p\nmid n, \ 1 < p < n\}, \quad
     S_3:=\left\{p^{\acute{v}_p(n)}: p\mid n, \ 1<p^{\acute{v}_p(n)}<n, \ p\in\mathbb{P}\right\}.
\end{equation*}
  Therefore, any two vertices in $V(D_n)$ that is labeled by integers from the set $S_1\cup S_2\cup S_3$ are adjacent, since for any two distinct labels  $\ell_1,\ell_2$, we have
   \begin{equation*}
    \begin{cases}
       (\ell_1, \ell_2)=1,    & \mbox{if} \ \ell_1, \ell_2\in S_2\cup S_3\\ 
       &\\
       (\ell_1, \ell_2)\mid n, & \mbox{if} \ \ell_1\in S_1. \\
    \end{cases}
   \end{equation*}
 %Notice that if $\ell_1\in S_1$, $\ell_2\in S_3$, then $(\ell_1, \ell_2)=(p,q^{\acute{v}_q(n)})=1$, if $p\neq q$ or $(\ell_1, \ell_2)=(p,q^{\acute{v}_q(n)})=p$, if $p=q$, where $\ell_1=p\mid n$ and $\ell_2=q^{\acute{v}_q(n)}$. 
 Consequently, one can see that $ S_1\cup S_2\cup S_3$ is the set of labels of vertices that are in the maximal clique of $D_n.$
 %any vertex in $V(D_n)$ has a label that is not from the set $S_1\cup S_2\cup S_3$ is disjoint with some labels in $S_2\cup S_3$.
 Suppose contrary that $u\in V(D_n)$ is a vertex $u$ of the maximal clique in $D_n$ such that $f(u)\notin S_1\cup S_2\cup S_3.$ Then we have $f(u)\nmid n$. Therefore, there exists a prime number $p_0$ such that $p_0^{\acute{v}_{p_0}(n)}\mid f(u)$; otherwise, for every a prime number $p$, $p^{\acute{v}_p(n)}\nmid f(u)$, so we get $v_p(f(u))<\acute{v}_p(n)=v_p(n)+1$. Consequently, $v_p(f(u))\leq v_p(n)$ which is a contradiction of $f(u)\nmid n$. Let $\ell=p_0^{\acute{v}_{p_0}(n)}$ be a certain label. Then we have $\ell\in S_2\cup S_3$, $\ell\mid f(u)$ and $\ell\neq f(u)$. So, $(f(u),\ell)=\ell\nmid n,$ which  contradicts the completeness of the maximal clique in $D_n$. Therefore, the set $S_1\cup S_2\cup S_3$ has all labels of vertices in the maximal clique of $D_n$. Obviously, $S_1, S_2$ and $S_3$ are mutually disjoint sets and
 $$|S_1|=\tau(n),\quad |S_2|=\pi(n)-\omega(n)\quad \mbox{and}\quad |S_3|=\gamma_1(n),$$
 we obtain
 $$Cl(D_n)=\tau(n) + \pi(n) - \omega(n) + \gamma_1(n).$$
 If $n$ is a prime number, then $Cl(D_n)=\pi(n)+1.$
%So, we have the following five cases which are
  %$$l_1,l_2\in S_1,\quad l_1,l_2\in S_2,\quad l_1,l_2\in S_3,\quad l_1\in S_1, l_2\in S_2\cup S_3 \quad \mbox{and}\quad l_2\in S_2, l_3\in S_3.$$
  %In the same manner of the proof of Theorem \ref{fulldegree} for the last five cases, we can clam that any two vertices in $V(D_n)$ that are labeled by integers from the set $S_1\cup S_2\cup S_3$ are adjacent.\\
\end{proof}

\begin{cor} \label{corVI3}
  Let $G$ be a graph with order $n$. If the graph $G$ is Diophantine, then $Cl(G)\leq Cl(D_n)$.
\end{cor}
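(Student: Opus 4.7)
The plan is to deduce this immediately from Corollary \ref{spanning-thm} together with Theorem \ref{complete_subgraph}. By Corollary \ref{spanning-thm}, if $G$ is a Diophantine graph of order $n$, then $G$ is labeling isomorphic to a spanning subgraph of $D_n$; in particular, $G$ is (graph-)isomorphic to a subgraph of $D_n$ on the same vertex set. Since subgraph inclusion cannot increase the order of a largest clique, this gives $Cl(G)\leq Cl(D_n)$, and Theorem \ref{complete_subgraph} supplies the explicit value of the right-hand side.

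In more detail, I would first invoke Corollary \ref{spanning-thm} to fix a labeling isomorphism $\varphi: V(G)\to V(D_n)$ such that $uv\in E(G)$ implies $\varphi(u)\varphi(v)\in E(D_n)$. Then I would take a maximum clique $K\subseteq V(G)$ of size $Cl(G)$ and observe that $\varphi(K)\subseteq V(D_n)$ is a set of pairwise adjacent vertices in $D_n$, hence a clique in $D_n$. Since $\varphi$ is a bijection on $V(G)$, $|\varphi(K)|=|K|=Cl(G)$, so $Cl(D_n)\geq |\varphi(K)|=Cl(G)$.

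The statement is really a general monotonicity property of the clique number under the spanning-subgraph relation, combined with the structural characterization of Diophantine graphs provided earlier. There is no genuine obstacle here: once Corollary \ref{spanning-thm} is available, the proof is essentially a one-line consequence. The only thing worth being careful about is to note that labeling isomorphism is in particular a graph isomorphism, so cliques are transported faithfully.
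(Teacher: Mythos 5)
Your proposal is correct and matches the paper's intent: the paper states this corollary without proof as an immediate consequence of Corollary \ref{spanning-thm} (exactly as it does for the analogous bounds on $F(G)$ and $\delta(G)$), and your argument via transporting a maximum clique through the labeling isomorphism is the standard monotonicity step being implicitly invoked. No issues.
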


\begin{rem} Let $D_n$ be the maximal Diophantine graph of order $n$. Then
\begin{itemize}
  \item[1.] $|E(D_n)|\geq\frac{1}{2}Cl(D_n)\big(Cl(D_n)-1\big)\geq \frac{1}{2}F(D_n)\big(F(D_n)-1\big),$
  \item[2.] if $D_n$ is not a complete graph, then $F(D_n)\leq\delta(D_n)$,
  \item[3.] for every $n\in \Z^+$, $F(D_n)\leq Cl(D_n)\leq n$.
%  \item[4.] $CL(D_n)=n$ if and only if $F(D_n)=n$,
%  \item[5.] $F(D_n)=n$ if and only if $D_n$ is a complete graph.
\end{itemize}
\end{rem}

\begin{lem}
   For every a prime number $p\leq\frac{n}{2}$, $p\mid n$ and $p^{\acute{v}_p(n)}>\frac{n}{2}$ if and only if $D_n$ is a complete graph.
\end{lem}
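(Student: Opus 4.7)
The plan is to reduce everything to Lemma \ref{lem1}. Observe that $D_n$ is complete if and only if there is no pair of non-adjacent vertices; by Lemma \ref{lem1}, this holds if and only if for every prime $p\in\mathbb{P}$, the set $M_{p^{\acute{v}_p(n)}}$ contains at most one element, i.e.\ $\left\lfloor n/p^{\acute{v}_p(n)}\right\rfloor\leq 1$, which is equivalent to
\[
p^{\acute{v}_p(n)}>\tfrac{n}{2}\qquad\text{for every prime }p.
\]
So the whole proof is about showing that this universal statement over all primes is the same as the conjunction stated in the lemma (which only ranges over $p\leq n/2$).

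First I would discharge the case $p>n/2$ and show it contributes nothing. If $p>n$, then $v_p(n)=0$, so $p^{\acute{v}_p(n)}=p>n>n/2$. If $n/2<p\leq n$, then either $p=n$ (giving $p^{\acute{v}_p(n)}=n^2>n/2$) or $p\nmid n$ (giving $p^{\acute{v}_p(n)}=p>n/2$). In all cases the condition $p^{\acute{v}_p(n)}>n/2$ is automatic, so the universal condition reduces to: for every prime $p\leq n/2$, $p^{\acute{v}_p(n)}>n/2$.

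Next I would absorb the divisibility requirement $p\mid n$ into the inequality. For a prime $p\leq n/2$, if $p\nmid n$, then $\acute{v}_p(n)=1$ and $p^{\acute{v}_p(n)}=p\leq n/2$, which violates $p^{\acute{v}_p(n)}>n/2$. Therefore the condition $p^{\acute{v}_p(n)}>n/2$ already forces $p\mid n$, and conversely if $p\mid n$ and $p^{\acute{v}_p(n)}>n/2$ the condition holds. Hence requiring $p\mid n$ and $p^{\acute{v}_p(n)}>n/2$ for all primes $p\leq n/2$ is equivalent to requiring $p^{\acute{v}_p(n)}>n/2$ for all primes $p$, which by the opening paragraph is equivalent to $D_n$ being complete.

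The proof is essentially a bookkeeping exercise; the only subtlety to watch for is the case analysis $p>n$, $n/2<p\leq n$ with $p=n$ or $p\nmid n$, which is where one has to be careful not to silently assume $p\mid n$. Once Lemma \ref{lem1} is invoked the rest is a routine rearrangement of quantifiers.
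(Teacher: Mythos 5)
Your proof is correct and rests on the same foundation as the paper's: both arguments reduce completeness of $D_n$ to the structure of the sets $M_{p^{\acute{v}_p(n)}}$ via Lemma \ref{lem1}, the paper running two proofs by contradiction (exhibiting the pairs $p,2p$ and $p^{\acute{v}_p(n)},2p^{\acute{v}_p(n)}$ in the converse) where you instead phrase everything as a single chain of equivalences, $D_n$ complete $\iff$ $\left\lfloor n/p^{\acute{v}_p(n)}\right\rfloor\leq 1$ for all $p$ $\iff$ $p^{\acute{v}_p(n)}>n/2$ for all $p$, followed by the quantifier bookkeeping. Your handling of the range $p>n/2$ and of the absorption of $p\mid n$ into the inequality is accurate, so this is a valid, slightly cleaner rendering of the same argument.
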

\begin{proof}
  Assume $p\leq\frac{n}{2}$ is prime number such that $p\mid n$ and $p^{\acute{v}_p(n)}>\frac{n}{2}$. Suppose contrary that the maximal Diophantine graph $D_n$ is not a complete graph. Then there exist $u,v\in V(D_n)$ such that $uv\notin E(D_n)$. Therefore, using lemma \ref{lem1}, there exists a prime number $p$ such that $f(u),f(v)\in M_{p^{\acute{v}_p(n)}}$. Let $f(u)=tp^{\acute{v}_p(n)}$ and $f(v)=s p^{\acute{v}_p(n)}$ for some $t,s\geq1$ and $t<s$. Then, $p^{\acute{v}_p(n)}<\frac{n}{s}\leq\frac{n}{2},$
  this contradicts the assumption. Hence, $D_n$ is a complete graph.\\
  %for every $k\in\{1,2,\dots,n\}$, $k$ is a divisor of $n$; otherwise, suppose contrary that there exists $k\in\{1,2,\dots,n\}$ such that $k$ is not divisor of $n$. Then there exists a prime number $p$ such that $v_p(n)<v_p(k)$. Thus, $\acute{v}_p(n)<v_p(k)$. Therefore, we get $p^{\acute{v}_p(n)}\mid p^{v_p(k)}$. Consequently,  $p^{\acute{v}_p(n)}\mid k$. Then $p^{\acute{v}_p(n)}<\frac{n}{2}$, which is a contradiction of the assumption. \textbf{or} $k=p^{\acute{v}_p(n)}$, where $\frac{n}{2}<p^{\acute{v}_p(n)}<n$. Using Lemma \ref{lem2}, we have that for every $u\in V(D_n)$ with label $k$, $\deg(u)=n-1$. Hence, $D_n$ is a complete graph.\\

  Conversely, let $D_n$ be a complete graph and consider contrary that there exists a prime number $p\leq\frac{n}{2}$ such that $p\nmid n$ or $p^{\acute{v}_p(n)}<\frac{n}{2}$, otherwise, if $p^{\acute{v}_p(n)}=\frac{n}{2}$, then $p^{\acute{v}_p(n)}\mid n$ that is a contradiction. Then we have the following two cases. In case of $p\leq\frac{n}{2}$ and $p\nmid n$, we obtain $2p<n$. Then we get $(p, 2p)=p\nmid n$. Therefore,  $F(D_n)<n$. In the other case of $p^{\acute{v}_p(n)}<\frac{n}{2}$, we have $(p^{\acute{v}_p(n)}, 2p^{\acute{v}_p(n)})= p^{\acute{v}_p(n)}\nmid n$. Therefore, $F(D_n)<n$. Consequently, from the two cases, $D_n$ is not a complete graph, this contradicts the hypothesis.
\end{proof}

\begin{thm}
  The independence number of $D_n$ is given by
  $$\alpha(D_n)=\max\limits_{2\leq p\leq n}\left\lfloor\frac{n}{p^{\acute{v}_p(n)}}\right\rfloor,$$
  where $p\in\mathbb{P}$. In particular, if $n$ is odd, we have 
  $$\alpha(D_n)=\left\lfloor\frac{n}{2}\right\rfloor.$$
\end{thm}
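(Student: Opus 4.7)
For the lower bound, fix any prime $p$ with $2 \leq p \leq n$. By Lemma \ref{lem1}, any two labels in $M_{p^{\acute{v}_{p}(n)}}$ correspond to non-adjacent vertices of $D_n$, so the $\lfloor n/p^{\acute{v}_p(n)} \rfloor$ vertices with such labels form an independent set. Hence $\alpha(D_n) \geq \lfloor n/p^{\acute{v}_p(n)} \rfloor$ for every such $p$, and taking the maximum gives one inequality. For the particular case $n$ odd, $p = 2$ yields $\lfloor n/2 \rfloor$, whereas any other prime $q$ satisfies $q^{\acute{v}_q(n)} \geq 3$ (if $q \nmid n$) or $q^{\acute{v}_q(n)} \geq q^{2} \geq 9$ (if $q \mid n$), so $\lfloor n/q^{\acute{v}_q(n)} \rfloor \leq \lfloor n/3 \rfloor < \lfloor n/2 \rfloor$; thus $p = 2$ achieves the maximum.

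For the upper bound, let $S$ be any independent set with $|S| \geq 2$. Every $u \in S$ satisfies $f(u) \nmid n$, since a divisor of $n$ would make $u$ adjacent to every other vertex by Theorem \ref{lem2}, contradicting $|S| \geq 2$. Hence the set $P(u) := \{p \in \mathbb{P} : p^{\acute{v}_p(n)} \mid f(u)\}$ is non-empty for each $u \in S$, and by Lemma \ref{lem1} the family $\{P(u)\}_{u \in S}$ is pairwise intersecting. The easy sub-case is when there is a common prime $p^{*} \in \bigcap_{u \in S} P(u)$: then $S \subseteq M_{(p^{*})^{\acute{v}_{p^{*}}(n)}}$, so $|S| \leq \lfloor n/(p^{*})^{\acute{v}_{p^{*}}(n)} \rfloor \leq \max_p \lfloor n/p^{\acute{v}_p(n)} \rfloor$, as required.

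The remaining case is that $\{P(u)\}_{u \in S}$ has empty common intersection. Then $|P(u)| \geq 2$ for every $u \in S$, since a singleton $P(u_0) = \{q\}$ would force $q \in \bigcap_u P(u)$ by the pairwise intersection property. Consequently each $f(u) \leq n$ is divisible by a product $p^{\acute{v}_p(n)} q^{\acute{v}_q(n)}$ of two distinct critical prime powers, which is a strong size restriction. The main obstacle is converting this arithmetic constraint, combined with the pairwise-intersecting structure, into the target bound $|S| \leq \max_p \lfloor n/p^{\acute{v}_p(n)} \rfloor$. My plan here is to bound via inclusion-exclusion the number of integers in $\{1, \ldots, n\}$ divisible by some product of two distinct critical prime powers, exploiting that such multiples are significantly fewer than the multiples of the single smallest critical prime power $a_1$, where $\lfloor n/a_1 \rfloor = \max_p \lfloor n/p^{\acute{v}_p(n)} \rfloor$; the pairwise intersection hypothesis then further tightens this count.
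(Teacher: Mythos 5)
Your lower bound, your treatment of the odd case, and your reduction of the upper bound to the structure of the sets $P(u)$ are all correct, and you have isolated exactly the right difficulty: an independent set $S$ of size at least $2$ yields a pairwise-intersecting family $\{P(u)\}_{u\in S}$ of nonempty prime sets, and only when this family has a common element does the containment $S\subseteq M_{p^{\acute{v}_p(n)}}$ become available. (The paper's own proof never confronts the other case: it exhibits each $M_{p^{\acute{v}_p(n)}}$ as an independent set and then asserts the equality, which by itself is only the lower bound.) However, your plan for the remaining case does not work as stated, so the proposal has a genuine gap. Counting the integers in $\{1,\dots,n\}$ divisible by a product of two distinct critical prime powers gives a bound that can be vastly larger than $\max_p\lfloor n/p^{\acute{v}_p(n)}\rfloor$: take $n$ prime, so that every critical prime power is just a prime $p\leq n$ and the target bound is $\lfloor n/2\rfloor$, while the integers in $\{1,\dots,n\}$ with at least two distinct prime factors number $n-O(n/\log n)$. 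So the inclusion-exclusion count is quantitatively useless there, and all of the work has to come from the pairwise-intersection hypothesis, which your plan invokes only through the phrase ``further tightens this count'' without saying how.

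To see what is actually needed, note that for odd $n$ the whole upper bound follows from a pigeonhole having nothing to do with counting doubly divisible integers: two labels sharing a critical prime power share a prime, hence are never consecutive integers, and the label $1$ cannot occur in an independent set of size at least $2$; grouping $\{2,3\},\{4,5\},\dots,\{n-1,n\}$ gives $|S|\leq(n-1)/2=\lfloor n/2\rfloor$. For even $n$ (where $2^{\acute{v}_2(n)}\geq 4$ and the maximum may be attained at an odd prime) a more delicate version of such an argument, or a genuinely different use of the intersecting structure, is required. As written, the case you yourself flag as ``the main obstacle'' remains unproved, so the proposal is an incomplete proof of the theorem.
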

\begin{proof}
   Let $f(u),f(v)$ be two labels, where $u,v\in V(D_n)$. Then, using Lemma \ref{lem1},  $uv\notin E(D_n)$ if and only if there exists $p\in\mathbb{P}$ such that $f(u), f(v)\in M_{p^{\acute{v}_{p}(n)}}.$
  %Let $f(u)=k_1 p^{\acute{v}_{p}(n)}$, $f(v)=k_2 p^{\acute{v}_{p}(n)}$, where $k_1, k_2=1, 2, \dots, \left\lfloor\frac{n}{p^{\acute{v}_p(n)}}\right\rfloor$ and $k_1\neq k_2$. Thus, we have
  %$$(f(u),f(v))=(k_1 p^{\acute{v}_{p}(n)}, k_2 p^{\acute{v}_{p}(n)})=(k_1, k_2) p^{\acute{v}_{p}(n)}\nmid n$$
  Therefore, the set of vertices of $D_n$ with labels in $M_{p^{\acute{v}_p(n)}}$
  is an independent set.
  Hence, 
  $$\alpha(D_n)=\max\limits_{2\leq p\leq n}\left|M_{p^{\acute{v}_p(n)}}\right|=\max\limits_{2\leq p\leq n}\left\lfloor\frac{n}{p^{\acute{v}_p(n)}}\right\rfloor.$$
  If $n$ is an odd, then the set of nonadjacent vertices in $D_n$ with labels in $M_2=\left\{2, 4, 6, \dots, 2\left\lfloor\frac{n}{2}\right\rfloor\right\}$ is a maximal independent set. 
  %(for $(2k_1, 2k_2)=2(k_1, k_2)\nmid n$ for all $k_1,k_2=1, 2, \dots, \lfloor\frac{n}{2}\rfloor$ and $k_1\neq k_2$). 
  Hence, $\alpha(D_n)=\left\lfloor\frac{n}{2}\right\rfloor$.
\end{proof}

\begin{cor} \label{corVI4}
  Let $G$ be a graph with order $n$. If the graph $G$ is Diophantine, then $\alpha(G)\geq\alpha(D_n)$.
\end{cor}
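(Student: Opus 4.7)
The plan is to deduce the inequality directly from Corollary \ref{spanning-thm}, which shows that any Diophantine graph $G$ of order $n$ is labeling isomorphic to a spanning subgraph of $D_n$, combined with the elementary fact that the independence number is antitone under edge addition. This parallels the proofs of Corollaries \ref{corVI2} and \ref{corVI3}, except that the inequality is reversed here because $\alpha$ weakly decreases when edges are added, whereas $F$ and $Cl$ weakly increase.

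First I would invoke Corollary \ref{spanning-thm} to produce a spanning subgraph $H$ of $D_n$ that is labeling isomorphic to $G$. Since a labeling isomorphism is in particular a graph isomorphism (the bijection $\varphi$ in the definition satisfies $uv\in E(G)$ iff $\varphi(u)\varphi(v)\in E(H)$), it preserves both adjacency and non-adjacency, and therefore bijectively maps independent sets to independent sets. In particular $\alpha(G)=\alpha(H)$.

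Next I would take a maximum independent set $S\subseteq V(D_n)$ with $|S|=\alpha(D_n)$, whose existence (and explicit description via $M_{p^{\acute{v}_p(n)}}$) is guaranteed by the preceding theorem. Because $V(H)=V(D_n)$ and $E(H)\subseteq E(D_n)$, any two vertices that were nonadjacent in $D_n$ remain nonadjacent in $H$; hence $S$ is still independent in $H$, yielding $\alpha(H)\geq |S|=\alpha(D_n)$. Together with $\alpha(G)=\alpha(H)$ this gives the desired bound.

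There is no real obstacle to surmount: the structural content has been absorbed into Corollary \ref{spanning-thm} and into the computation of $\alpha(D_n)$, and all that remains is the one-line observation that deleting edges cannot shrink an independent set. The only item worth stating explicitly is the passage from ``labeling isomorphic'' to ``graph isomorphic,'' so that $\alpha$ is transported unchanged from $G$ to the spanning subgraph $H$.
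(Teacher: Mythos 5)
Your proposal is correct and matches the argument the paper intends: the corollary is stated without proof precisely because it follows from Corollary \ref{spanning-thm} together with the observation that deleting edges from $D_n$ cannot decrease the independence number. Your explicit note that a labeling isomorphism preserves non-adjacency, hence transports $\alpha$ unchanged, is a welcome clarification but does not change the route.
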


\begin{thm}(\textbf{sufficient condition})\label{thm_sufficient}
  Let $G$ be a graph with order $n$. If $\alpha(G)\geq n-F(D_n)$, then $G$ is a Diophantine graph.
\end{thm}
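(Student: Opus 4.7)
The plan is to invoke Corollary \ref{spanning-thm} and build an explicit embedding of $G$ as a spanning subgraph of $D_n$. Concretely, I will construct a bijection $\varphi : V(G) \to V(D_n)$ such that $uv \in E(G)$ implies $\varphi(u)\varphi(v) \in E(D_n)$; pulling back the Diophantine labeling of $D_n$ along $\varphi$ then yields a Diophantine labeling of $G$, establishing the claim.

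The key observation driving the construction is that every full-degree vertex of $D_n$ is adjacent in $D_n$ to all other vertices, so it acts as a universal neighbour. Let $A \subseteq V(D_n)$ denote the set of full-degree vertices, so that $|A| = F(D_n)$ and $|V(D_n) \setminus A| = n - F(D_n)$. If $\varphi$ can be arranged so that at least one endpoint of every edge of $G$ is sent into $A$, then every such edge is automatically an edge of $D_n$. Equivalently, the preimage $I' := \varphi^{-1}(V(D_n) \setminus A)$, of size $n - F(D_n)$, must be an independent set in $G$.

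This is exactly where the hypothesis $\alpha(G) \geq n - F(D_n)$ enters. I would take any maximum independent set of $G$ and, if necessary, restrict it to a subset $I' \subseteq V(G)$ of size exactly $n - F(D_n)$. Then I would pick an arbitrary bijection $I' \to V(D_n) \setminus A$ and an arbitrary bijection $V(G) \setminus I' \to A$, and combine them to define $\varphi$. For any edge $uv \in E(G)$, independence of $I'$ forbids both endpoints from lying in $I'$, so at least one of $\varphi(u), \varphi(v)$ belongs to $A$; since that vertex has full degree in $D_n$, the edge $\varphi(u)\varphi(v)$ lies in $E(D_n)$, as required.

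I do not anticipate a serious obstacle beyond the universal-neighbour observation above. The only borderline case is $F(D_n) = n$, where $D_n = K_n$ and the hypothesis $\alpha(G) \geq 0$ is vacuous; but then $I' = \emptyset$ and every bijection $\varphi$ works, consistent with the fact that every graph on $n$ vertices is a spanning subgraph of $K_n$ and hence Diophantine.
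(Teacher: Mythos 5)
Your proposal is correct and follows essentially the same argument as the paper: both rest on the observation that the $F(D_n)$ full-degree vertices of $D_n$ are universal, so placing the complement of an independent set of size $n-F(D_n)$ onto those vertices (equivalently, giving those vertices the labels from Theorem \ref{lem2}) guarantees every edge of $G$ has an endpoint with a universal label. The paper assigns the labels directly and checks the divisibility condition, whereas you route the same construction through Corollary \ref{spanning-thm}; the difference is purely presentational.
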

\begin{proof}
  Let $G$ be a graph with $n$ vertices such that $\alpha(G)\geq n-F(D_n)$. Suppose $S$ is a subgraph of $G$ with number of vertices less than or equal to $F(D_n)$ vertices. Then, using Theorem \ref{lem2}, the number that is either a divisor of $n$ or is of the form $p^{\acute{v}_{p}(n)}$, where $\frac{n}{2}<p^{\acute{v}_{p}(n)}<n$ can be used as labels in the vertices of the subgraph $S$ of $G$. Therefore, the other labels can be assigned in the remaining independent vertices of order $\alpha(G)$ from the graph $G$. Hence, $G$ is a Diophantine graph.
\end{proof}

\begin{ex}
  The graph $G=K_3+\overline{K_4}$ satisfies the sufficient condition in Theorem \ref{thm_sufficient} as $\alpha(G)\geq 7-F(D_7)$; accordingly, the graph $G$ is a Diophantine graph as seen in Figure \ref{figure1} part (a).
\begin{figure*}[h!]
\centering
  \begin{subfigure}{0.4\textwidth}
  \centering
  \begin{tikzpicture}
   [scale=.5,auto=center,every node/.style={circle,fill=blue!20}]
  \node (v1)  at (0,3)   {$1$};
  \node (v5)  at (3,4)   {$5$};
  \node (v7)  at (-3,4)  {$7$};
  \node (v3)[circle,fill=red!20] at (3,0)   {$3$};
  \node (v2)[circle,fill=red!20] at (-3,0)  {$2$};
  \node (v6)[circle,fill=red!20] at (1,0)   {$6$};
  \node (v4)[circle,fill=red!20] at (-1,0)  {$4$};
% these are the straight lines from one vertex to another
  \draw (v1) -- (v2);
  \draw (v1) -- (v3);
  \draw (v1) -- (v4);
  \draw (v1) -- (v5);
  \draw (v1) -- (v6);
  \draw (v1) -- (v7);

  \draw (v5) -- (v2);
  \draw (v5) -- (v3);
  \draw (v5) -- (v4);
  \draw (v5) -- (v6);
  \draw (v5) -- (v7);

  \draw (v7) -- (v2);
  \draw (v7) -- (v3);
  \draw (v7) -- (v4);
  \draw (v7) -- (v6);
  \end{tikzpicture}\caption{The graph $G=K_3+\overline{K_4}$}  
 \end{subfigure}
 ~~~~~~~
 \begin{subfigure}{0.4\textwidth}
 \centering
  \begin{tikzpicture}
   [scale=.5,auto=center,every node/.style={circle,fill=blue!20}]
  \node (v1) at (1,6)   {$1$};
  \node (v5) at (-1,6)  {$5$};
  \node (v7) at (-3,4)  {$7$};
  \node (v3) at (3,4)   {$3$};
  \node (v2)[circle,fill=red!20] at (-2,2)  {$2$};
  \node (v6)[circle,fill=red!20] at (0,1.5)   {$6$};
  \node (v4)[circle,fill=red!20] at (2,2)   {$4$};
% these are the straight lines from one vertex to another
  \draw (v1) -- (v2);
  \draw (v1) -- (v3);
  \draw (v1) -- (v4);
  \draw (v1) -- (v5);
  \draw (v1) -- (v6);
  \draw (v1) -- (v7);

  \draw (v5) -- (v2);
  \draw (v5) -- (v3);
  \draw (v5) -- (v4);
  \draw (v5) -- (v6);
  \draw (v5) -- (v7);

  \draw (v7) -- (v2);
  \draw (v7) -- (v3);
  \draw (v7) -- (v4);
  \draw (v7) -- (v6);

  \draw (v3) -- (v2);
  \draw (v3) -- (v4);
  \end{tikzpicture}\caption{The maximal Diophantine graph $D_7$}
 \end{subfigure}\caption{}\label{figure1} 
\end{figure*}
\end{ex}
This sufficient condition is not a necessary condition. For instance, $D_7$ is a Diophantine graph. However, $D_7$ does not meet the sufficient condition since $\alpha(D_7)<7-F(D_7)$.
%%%%%%%%%%%%%%%%%%%%%%%%%%%%%%%%%%%%%%%%%%%%%%%%%%%%%%%%%%%%%%%%%%%%%%%%%%%
%%%%%%%%%%%%%%%%%%%%%%%%%%%%%%%%%%%%%%%%%%%%%%%%%%%%%%%%%%%%%%%%%%%%%%%%%%%
\subsection{The Minimum Degree Vertex with Minimum Label}
%%%%%%%%%%%%%%%%%%%%%%%%%%%%%%%%%%%%%%%%%%%%%%%%%%%%%%%%%%%%%%%%%%%%%%%%%%%
%%%%%%%%%%%%%%%%%%%%%%%%%%%%%%%%%%%%%%%%%%%%%%%%%%%%%%%%%%%%%%%%%%%%%%%%%%%

 \hspace{0.5cm} Let $V_{\delta}(G)$ denote the set of vertices with minimum degree in $G$, i.e.,  $V_{\delta}(G):=\{u\in V(G):\deg(u)=\delta(G)\}$. So, we denote the following  $f(u_0):=\min\{f(u): u\in V_{\delta}(D_n)\},$
  in which the vertex $u_0$ is the vertex in $V_{\delta}(D_n)$ with minimum label and $f(u_0)$ is the smallest label of a vertex in $V_{\delta}(D_n)$.

\begin{rem} Let $u_0$ be the vertex in $V_{\delta}(D_n)$ with minimum label.
  \begin{itemize}
  \item[1.] $f(u_0)=1$ if and only if $D_n$ is a complete graph.
  \item [2.] For every $n>3$, $\delta(D_n)\geq 3$.
%\item[3.] If  $D_n=K_{F(D_n)}+\overline{K_{\left|M_{p^{\acute{v}_p(n)}}\right|}}$,  then $f(u_0)=p^{\acute{v}_p(n)}$, where $p\in\mathbb{P}$.%, e.g., in Figure \ref{figure0}, let $u_0\in V(D_9)$, we get $f(u_0)=2$.
   \end{itemize}
\end{rem}

\begin{thm}
  Suppose the maximal Diophantine graph $D_n$ is not a complete graph and the finite sequence 
  $$p_1^{\acute{v}_{p_{_1}}(n)}< p_2^{\acute{v}_{p_{_2}}(n)}<\dots< p_s^{\acute{v}_{p_s}(n)}<\frac{n}{2},$$ 
  where $p_i$, $i = 1, 2, \dots, s$ are distinct prime numbers. If the vertex $u_0\in V_{\delta}(D_n)$ has minimum label, then there exits an integer $r<s$ such that $$f(u_0)=\prod\limits_{i=1}^{r}p_i^{\acute{v}_{p_i}(n)}<n\quad \mbox{and}\quad  p_{r+1}^{\acute{v}_{p_{r+1}}(n)}f(u_0)>n.$$
\end{thm}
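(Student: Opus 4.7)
The plan is to translate the minimum-degree problem into a combinatorial optimization on subsets of $\{1,\ldots,s\}$ and solve it with a greedy/exchange argument. By Lemma \ref{lem1}, two vertices $u,v$ are non-adjacent in $D_n$ exactly when some critical prime power $p^{\acute{v}_{p}(n)}$ divides both $f(u)$ and $f(v)$. Critical prime powers $P\ge n/2$ satisfy $|M_P|\le 1$ and so contribute no non-adjacencies, meaning that only $P_1,\ldots,P_s$ from the stated sequence can matter. Writing $S(u):=\{i\in\{1,\ldots,s\}:P_i\mid f(u)\}$, the non-neighbors of $u$ are precisely $\bigcup_{i\in S(u)}M_{P_i}\setminus\{f(u)\}$, so
$$\deg(u)=n-\Bigl|\textstyle\bigcup_{i\in S(u)}M_{P_i}\Bigr|.$$
Because the $P_i$ are pairwise coprime prime powers of distinct primes, a label $f(u)\le n$ with $S(u)\supseteq S$ exists iff $\prod_{i\in S}P_i\le n$, and the smallest such label is $\prod_{i\in S}P_i$ itself.

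The task is therefore: among feasible $S\subseteq\{1,\ldots,s\}$ (those with $\prod_{i\in S}P_i\le n$), maximize $U(S):=|\bigcup_{i\in S}M_{P_i}|$. I claim the maximizer is the initial segment $S^{*}=\{1,\ldots,r\}$, where $r$ is the largest index with $\prod_{i=1}^{r}P_i\le n$. Two ingredients suffice. First, enlarging $S$ weakly enlarges the union, so one should include as many indices as feasibility allows. Second, the key exchange lemma: if $j\in S$, $i\notin S$, $i<j$, and $S':=(S\setminus\{j\})\cup\{i\}$, then $S'$ is also feasible (since $P_i<P_j$) and $U(S')\ge U(S)$. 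Setting $R=S\setminus\{j\}$, this reduces to
$$\Bigl|M_{P_i}\setminus\textstyle\bigcup_{k\in R}M_{P_k}\Bigr|\;\ge\;\Bigl|M_{P_j}\setminus\textstyle\bigcup_{k\in R}M_{P_k}\Bigr|,$$
which I would prove by the explicit injection $\ell P_j\mapsto \ell P_i$ on $\ell\in\{1,\ldots,\lfloor n/P_j\rfloor\}$. The image lies in $M_{P_i}$ because $\ell P_i\le \ell P_j\le n$, and it avoids every $M_{P_k}$ for $k\in R$ because, for distinct prime powers $P_k,P_i,P_j$, we have $P_k\mid \ell P_i$ iff $P_k\mid \ell$ iff $P_k\mid \ell P_j$.

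Iterating the exchange step forces any optimal $S$ to lie in an initial segment, and filling to capacity yields $S^{*}=\{1,\ldots,r\}$. The smallest label $f(u)\le n$ with $S(u)=S^{*}$ is $\prod_{i=1}^{r}P_i$; moreover, any other $S$ that ties $U(S^{*})$ can be exchanged step-by-step down to $S^{*}$, along which the product strictly decreases, so $\{1,\ldots,r\}$ has the smallest product (and hence smallest minimum label) among all maximizers. The greedy termination conditions are exactly $\prod_{i=1}^{r}P_i<n$ and $P_{r+1}\cdot\prod_{i=1}^{r}P_i>n$, yielding $f(u_0)=\prod_{i=1}^{r}P_i$ with $r<s$ and $P_{r+1}f(u_0)>n$, while the non-completeness of $D_n$ ensures $s\ge 1$ so that the greedy actually starts. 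The main obstacle I expect is the exchange lemma: converting the intuition ``smaller $P_i$ covers more'' into a clean inequality despite the floor functions in $\lfloor n/P_i\rfloor$. This is precisely why I would bypass inclusion-exclusion and rely on the explicit injection above, which handles the floors automatically.
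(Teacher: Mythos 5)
Your argument is correct in substance but follows a genuinely different route from the paper's. The paper works directly on the label of $u_0$: it first invokes Corollary \ref{cor1} to strip from $f(u_0)$ any factor dividing $n$, then uses Lemma \ref{lem3} to cut each remaining exponent down to exactly $\acute{v}_{p}(n)$, concluding $f(u_0)=\prod_j p_j^{\acute{v}_{p_j}(n)}$; it then asserts, rather briefly, by comparing the sizes $\big|M_{p_i^{\acute{v}_{p_i}(n)}}\big|$, that the prime powers involved must form the initial segment of the ordered sequence, and finally obtains $p_{r+1}^{\acute{v}_{p_{r+1}}(n)}f(u_0)>n$ by contradiction via Theorem \ref{thm_eq-deq2}. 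You instead reformulate the problem, using only Lemma \ref{lem1}, as maximizing $U(S)=\big|\bigcup_{i\in S}M_{P_i}\big|$ (where $P_i:=p_i^{\acute{v}_{p_i}(n)}$) over feasible index sets, and solve it by an exchange lemma proved with the explicit injection $\ell P_j\mapsto\ell P_i$, which is valid because the $P_k$ are powers of distinct primes and $\ell P_i\le\ell P_j\le n$. What your route buys is a rigorous justification of precisely the step the paper treats most casually --- that the optimal set of critical prime powers is an initial segment, and that among all degree-minimizing label classes the initial segment yields the smallest label --- while sidestepping the floor functions that would plague an inclusion--exclusion comparison. Two points to tighten: (i) your tie-breaking among maximizers needs the \emph{strict} monotonicity of $U$ under adding an index (each $P_j$ lies in $M_{P_j}$ but in no other $M_{P_k}$), which you only state weakly, and the impossibility of $\prod_{i\in S}P_i=n$ (no critical prime power divides $n$) is what upgrades $\le n$ to $<n$; (ii) like the paper, you assert $r<s$ without argument, and this actually fails when $\prod_{i=1}^{s}P_i\le n$ (e.g.\ $n=9$, where $s=1$ and $f(u_0)=2$) --- an edge case in the statement itself rather than a defect of your method.
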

\begin{proof}
  Suppose the maximal Diophantine graph $D_n$ of order $n$ is not a complete graph. Let $u_0$ be the vertex in $V_{\delta}(D_n)$ with minimum label. Then
  $$1<f(u_0)=\min\{f(u): u\in V_{\delta}(D_n)\}<n.$$ Therefore, we have 
  %there exists a prime number $p$ such that $p\mid f(u_0)$. Using the fundamental theorem of arithmetic (see \cite{Burton}), we get
  %\begin{equation}\label{eqn13}
  %    f(u_0)=\prod\limits_{p\mid f(u_0)}p^a<n,
  %\end{equation}
  %where $p$ is a prime number and $a\geq 0$ is an integer. we have 
  \begin{equation}\label{eqn8}
     f(u_0)=\prod\limits_{j=1}^{r_1}p_j^{\alpha_j} \prod\limits_{i=1}^{r_2}q_i^{\beta_i}<n. 
  \end{equation}
  where $p_j,q_i$ are distinct prime numbers and $\alpha_j,\beta_i$ are two non-negative integers  such that for every $j=1,2,\dots r_1$, $i=1,2,\dots r_2$
  \begin{equation}\label{eqn12}
      \alpha_j\geq \acute{v}_{p_j}(n)\quad\mbox{and}\quad 0\leq \beta_i<\acute{v}_{h_i}(n).
  \end{equation}
  Clearly, the two terms of equation \eqref{eqn8} are relatively primes and $\prod\limits_{i=1}^{r_2}q_i^{\beta_i}\mid n$. Otherwise, if $\prod\limits_{i=1}^{r_2}q_i^{\beta_i}\nmid n$, then there exists a prime number $q_i\mid f(u_0)$ such that $q_i^{\beta_i}\nmid n$. Therefore, there exists $i=1,2,\dots r_2$ such that $\beta_i\geq\acute{v}_{q_i}(n)$ which contradicts equation \eqref{eqn12}. Let $u\in V(D_n)$ such that $f(u)=\prod\limits_{j=1}^{r_1}p_j^{\alpha_j}$. Then, using equation \eqref{eqn8}, we get
  \begin{equation}\label{eqn10}
    f(u_0)=f(u)\prod\limits_{i=1}^{r_2}q_i^{\beta_i}. 
  \end{equation}
  Therefore, using Corollary \ref{cor1} and equation \eqref{eqn10},
  $$f(u)\leq f(u_o)\quad \mbox{and}\quad \deg(u)=\deg(u_0),$$
  This contradicts the hypothesise of the minimal label of $u_0$ unless $\beta_i=0$ in equation \eqref{eqn10}. Thus,
  \begin{equation}\label{eqn11}
    f(u_0)=f(u)=\prod\limits_{j=1}^{r_1}p_j^{\alpha_j},
  \end{equation}
  where $\alpha_j\geq \acute{v}_{p_j}(n)$. Then $\alpha_j=\acute{v}_{p_j}(n)+k_j$ for some $k_j\geq 0$. Consequently, using equation \eqref{eqn11}, we get
  \begin{equation}\label{eqn9}
      f(u_0)=\prod\limits_{j=1}^{r_1}p_j^{v_{p_j}(n)+k_j}= \prod\limits_{j=1}^{r_1}p_j^{\acute{v}_{q_j}(n)} \prod\limits_{j=1}^{r_1}p_j^{k_j}.
  \end{equation}
  Let $v\in V(D_n)$ such that $f(v)=\prod\limits_{j=1}^{r_1}p_j^{\acute{v}_{p_j}(n)}.$ Then, using equation \eqref{eqn9}, we obtain
  \begin{equation}\label{eqn1}
    f(u_0)= \prod\limits_{j=1}^{r_1}p_j^{k_j} f(v).
  \end{equation}
  Thus, using Lemma \ref{lem3}, $\deg(v)\leq \deg(u_o)$. Since $u_o$ has minimum degree, we have $\deg(v)=\deg(u_o)$.
  Therefore,
  $$f(v)\leq f(u_o)\quad \mbox{and}\quad \deg(v)=\deg(u_o).$$
  Since $f(u_o)$ is the minimum label and using equation \eqref{eqn1}, we get $k_j=0$. Consequently we have
  \begin{equation}\label{eqn14}
    f(u_0)=f(v)=\prod\limits_{j=1}^{r_1}p_j^{\acute{v}_{p_j}(n)}<n.
  \end{equation}
  Given a finite sequence $1<p_1^{\acute{v}_{p_{_1}}(n)}<p_2^{\acute{v}_{p_{_2}}(n)}<\dots<p_s^{\acute{v}_{p_s}(n)}<\frac{n}{2},$ where $p_i$, $i = 1, 2, \dots, s$ are distinct primes. Then we have
  \begin{equation*}
    \left|M_{p_1^{\acute{v}_{p_{_1}}(n)}}\right| \geq \left|M_{p_2^{\acute{v}_{p_{_2}}(n)}}\right| \geq \dots \geq \left|M_{p_s^{\acute{v}_{p_s}(n)}}\right|.
  \end{equation*}
  Since $p_i^{\acute{v}_{p_i}(n)}\mid f(u_0)$ for some $i=1, 2, \dots s$ and $u_0$ is the vertex in $V_{\delta}(D_n)$ with minimum label, we have the following cases:
  \\
  Case i: If the prime factors of $f(u_0)$ are $i$ primes, then $f(u_0)\in M_{p_1^{\acute{v}_{p_{_1}}(n)}}, \ f(u_0)\in M_{p_2^{\acute{v}_{p_{_2}}(n)}}, \ \dots, \ f(u_0)\in M_{p_i^{\acute{v}_{p_i}(n)}},$
  where $i=1,2,\dots, r$ for some $r<s$. Then, using equation \eqref{eqn14},  we have the following formula 
  $$f(u_0)=\prod\limits_{j=1}^{i}p_j^{\acute{v}_{p_j}(n)}<n.$$
  where $i=1,2,\dots, r$ and $r<s$. Suppose contrarily that for every $i=1,2,\dots, r$ and $r<s$ such that
  $p_{i+1}^{\acute{v}_{p_{i+1}}(n)}f(u_0)<n.$
  Let $w\in V(D_n)$ such that $f(w)=p_{i+1}^{\acute{v}_{p_{i+1}}(n)}f(u_0).$
  Then, using Lemma \ref{lem3}, $\deg(w)\leq \deg(u_o)$. Since the degree of $u_0$ is minimum, we get $\deg(w)=\deg(u_0)$. However, since
  $$f^*(u_0)=\frac{f(u_o)}{(f(u_o),n)}=\prod\limits_{j=1}^{i}p_j\quad\mbox{and}\quad f^*(w)=\frac{f(w)}{(f(w),n)}=\prod\limits_{j=1}^{i+1}p_j,$$
  therefore the reduced labels $f^*(u_0),f^*(u)$ do not have the same prime factors which contradicts Theorem \ref{thm_eq-deq2}. Hence, the proof follows.
\end{proof}

 %One notices that for all primes $p_i$, $i=1,2,\dots,r$, if $f(u_o)=\prod\limits_{i=1}^{r}p_i^{\acute{v}_{p_i}(n)}<n$, then $p_i^{\acute{v}_{p_i}(n)}<\frac{n}{2}$.
Clearly, one can see that $\delta(D_n)=\deg(u_0)$ and the degree of every vertex in $D_n$ is provided by 
\begin{thm}\cite{Nasr}.\label{deg(u)}
   If $f^*(u)=\prod\limits_{i=1}^{r}p_i^{k_i}$, where $u\in V(D_n)$, $p_i$, $i=1,2,\dots,r$ are distinct prime numbers and $k_i\geq0$, $i=1,2,\dots,r$, then
\begin{equation*}
\deg(u)=\left\{\begin{array}{ll}
               n-1,                                                                                                            & \hbox{$f^*(u)=1.$} \\
               n-\sum\limits_{1\leq i\leq r}\bigg\lfloor\frac{n}{p_i^{\acute{v}_{p_i}(n)}}\bigg\rfloor
                 +\sum\limits_{1\leq i.j\leq r}\bigg\lfloor\frac{n}{p_i^{\acute{v}_{p_i}(n)}p_j^{\acute{v}_{p_j}(n)}}\bigg\rfloor
                 -\dots +(-1)^{r}\Bigg\lfloor\frac{n}{\prod\limits_{1\leq i\leq r}p_i^{\acute{v}_{p_i}(n)}}\Bigg\rfloor,       & \hbox{$f^*(u)>1.$}
              \end{array}
       \right.
\end{equation*}
\end{thm}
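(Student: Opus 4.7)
The plan is to split on whether $f^*(u)=1$ or $f^*(u)>1$. If $f^*(u)=1$, then $f(u)=(f(u),n)$, so $f(u)\mid n$, and Theorem \ref{lem2} immediately yields $\deg(u)=n-1$, which is the first branch of the formula. The substantive case is $f^*(u)>1$, for which I would count the non-neighbors of $u$ via Lemma \ref{lem1} and subtract from $n-1$.

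The key bridge between $f(u)$ and its reduced label $f^*(u)$ is the claim: for any prime $p$, $f(u)\in M_{p^{\acute{v}_p(n)}}$ if and only if $p\mid f^*(u)$. I would verify this using $p$-adic valuations. From $f^*(u)=f(u)/(f(u),n)$ and $v_p((f(u),n))=\min(v_p(f(u)),v_p(n))$, one obtains $v_p(f^*(u))=\max(0,\,v_p(f(u))-v_p(n))$. Hence $p\mid f^*(u)$ is equivalent to $v_p(f(u))\geq v_p(n)+1=\acute{v}_p(n)$, which in turn is equivalent to $p^{\acute{v}_p(n)}\mid f(u)$, i.e., $f(u)\in M_{p^{\acute{v}_p(n)}}$. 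Combined with Lemma \ref{lem1}, this says a vertex $v$ is non-adjacent to $u$ exactly when $f(v)$ lies in $\bigcup_{i=1}^{r}M_{p_i^{\acute{v}_{p_i}(n)}}$. Since $f(u)$ itself belongs to this union, the number of non-neighbors of $u$ is $\big|\bigcup_{i=1}^{r}M_{p_i^{\acute{v}_{p_i}(n)}}\big|-1$, and therefore
$$\deg(u)=(n-1)-\left(\left|\bigcup_{i=1}^{r}M_{p_i^{\acute{v}_{p_i}(n)}}\right|-1\right)=n-\left|\bigcup_{i=1}^{r}M_{p_i^{\acute{v}_{p_i}(n)}}\right|.$$

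It remains to evaluate this union by the inclusion-exclusion principle. Because the $p_i$ are distinct primes, the critical prime powers $p_i^{\acute{v}_{p_i}(n)}$ are pairwise coprime, so every intersection $M_{p_{i_1}^{\acute{v}_{p_{i_1}}(n)}}\cap\dots\cap M_{p_{i_k}^{\acute{v}_{p_{i_k}}(n)}}$ equals $M_{p_{i_1}^{\acute{v}_{p_{i_1}}(n)}\cdots p_{i_k}^{\acute{v}_{p_{i_k}}(n)}}$, whose cardinality is $\lfloor n/(p_{i_1}^{\acute{v}_{p_{i_1}}(n)}\cdots p_{i_k}^{\acute{v}_{p_{i_k}}(n)})\rfloor$. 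Substituting these cardinalities into the inclusion-exclusion expansion yields precisely the alternating floor sum in the statement. The only delicate point in the argument is the $p$-adic valuation equivalence above; once that is in hand, the inclusion-exclusion step is routine, and I do not expect any further obstacle.
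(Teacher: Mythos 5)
Your argument is correct. The paper gives no proof of this theorem to compare against---it is imported from \cite{Nasr}---but your route (identify, via $v_p(f^*(u))=\max(0,v_p(f(u))-v_p(n))$, the primes dividing $f^*(u)$ with the sets $M_{p^{\acute{v}_p(n)}}$ containing $f(u)$, then apply Lemma \ref{lem1} and inclusion--exclusion over the pairwise coprime critical prime powers) is precisely the method the paper itself indicates for the analogous count in Theorem \ref{fulldegree2}, so there is nothing genuinely different here. One caveat worth recording: the statement as printed allows $k_i\geq 0$, but your proof (correctly) requires the $p_i$ to be exactly the primes that actually divide $f^*(u)$, i.e.\ $k_i\geq 1$; with a genuine $k_i=0$ the displayed alternating sum would overcount the non-neighbors, so that hypothesis should be read as $k_i\geq 1$.
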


\begin{cor}\label{thmVI1}
  Let $G$ be a graph of order $n$. If the graph $G$ is Diophantine, then $\delta(G)\leq \delta(D_n)$.
\end{cor}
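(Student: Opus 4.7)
The plan is to exploit Corollary \ref{spanning-thm}, which says that any Diophantine graph $G$ of order $n$ is labeling isomorphic to a spanning subgraph of $D_n$. Since ``spanning subgraph'' preserves the vertex set and only deletes edges, every vertex of $G$ has degree at most its degree in $D_n$ (under the isomorphism). So the inequality $\delta(G)\leq \delta(D_n)$ should follow by a direct comparison, not by any clever arithmetic argument.

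In detail, I would proceed as follows. First, invoke Corollary \ref{spanning-thm} to identify $G$ with a spanning subgraph of $D_n$ via a labeling isomorphism $\varphi:V(G)\to V(D_n)$. Second, pick a vertex $u^*\in V(D_n)$ realizing the minimum degree, i.e.\ $\deg_{D_n}(u^*)=\delta(D_n)$, and let $v^*:=\varphi^{-1}(u^*)\in V(G)$. Because $G$ is isomorphic to a spanning subgraph of $D_n$, edges of $G$ at $v^*$ correspond to a subset of the edges of $D_n$ at $u^*$, so $\deg_G(v^*)\leq \deg_{D_n}(u^*)=\delta(D_n)$. Third, since $\delta(G)\leq \deg_G(v^*)$ by definition of the minimum degree, chaining these two inequalities gives $\delta(G)\leq \delta(D_n)$, which is the claim.

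There is really no obstacle here; the only care needed is to note that we are not comparing the minimum-degree vertices of $G$ and $D_n$ directly (which would be the wrong direction), but rather using the spanning subgraph property to push $\delta(G)$ \emph{down} through the specific vertex corresponding to $u_0$. If one wished to make the argument self-contained (without invoking Corollary \ref{spanning-thm}), one could instead give the Diophantine labeling $f$ of $G$ explicitly and observe via Lemma \ref{lem1} that whenever $uv\in E(G)$ we must have $uv\in E(D_n)$ under the identification of vertices with labels; the edge-by-edge comparison then yields $\deg_G(v)\leq \deg_{D_n}(v)$ for every $v$, and the minimum-degree inequality follows in the same way.
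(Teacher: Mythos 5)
Your proposal is correct and follows essentially the same route as the paper: both invoke Corollary \ref{spanning-thm} to realize $G$ as a spanning subgraph of $D_n$ and then use the standard fact that a spanning subgraph has minimum degree at most that of its supergraph. Your write-up merely makes explicit (via the vertex $u^*$ attaining $\delta(D_n)$) the step the paper compresses into a single ``Hence.''
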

\begin{proof}
  Let $G$ be a Diophantine graph  of order $n$. Then, using Corollary \ref{spanning-thm}, the graph $G$ is labeling isomorphic to a spanning subgraph (say $\acute{G}$) of $D_n$, i.e., $G\cong_l\acute{G}$. Hence, $\delta(G)=\delta(\acute{G})\leq\delta(D_n)$
\end{proof}

\begin{thm}\label{lem5}
  There exists a vertex $w\in V_{\delta}(D_n)$ such that $\frac{n}{2}< f(w)< n$.
\end{thm}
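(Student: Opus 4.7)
The plan is to combine the preceding theorem (which pins down the minimum label vertex $u_0$ in $V_{\delta}(D_n)$) with Lemma \ref{lem3} to either take $w = u_0$ outright or scale $f(u_0)$ up to land inside the interval $(n/2, n)$.

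I would first dispatch the case in which $D_n$ is a complete graph. Then $\delta(D_n) = n - 1$ and $V_{\delta}(D_n) = V(D_n)$, so for $n \geq 3$ the vertex labeled $n - 1$ satisfies $n/2 < n - 1 < n$ and already belongs to $V_{\delta}(D_n)$.

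When $D_n$ is not complete, the preceding theorem supplies $u_0 \in V_{\delta}(D_n)$ with $f(u_0) = \prod_{i=1}^{r} p_i^{\acute{v}_{p_i}(n)} < n$. I split into two subcases. If $f(u_0) > n/2$, nothing further is needed and I take $w = u_0$. Otherwise $f(u_0) \leq n/2$, and I would set $w$ to be the vertex with label $f(w) := \lfloor n/f(u_0) \rfloor\, f(u_0)$. Three checks complete the argument. First, $f(u_0) \nmid n$, since $v_{p_i}(f(u_0)) = v_{p_i}(n) + 1 > v_{p_i}(n)$ for every $i$, and consequently $\lfloor n/f(u_0) \rfloor\, f(u_0) < n$. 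Second, from $\lfloor n/f(u_0)\rfloor f(u_0) \leq n < (\lfloor n/f(u_0)\rfloor + 1)\, f(u_0)$ one gets $n - f(w) < f(u_0) \leq n/2$, hence $f(w) > n/2$. Third, since $f(u_0) \mid f(w)$, Lemma \ref{lem3} gives $N(u_0) \supseteq N(w)$, so $\deg(w) \leq \deg(u_0) = \delta(D_n)$, and therefore $w \in V_{\delta}(D_n)$.

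The argument has essentially no serious obstacle. The one point that requires a moment of care is the strict inequality $f(w) < n$, which reduces to $f(u_0) \nmid n$; this is immediate from the factorization of $f(u_0)$, since each prime $p_i$ appears there with exponent strictly larger than its exponent in $n$.
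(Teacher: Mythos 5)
Your proposal is correct and uses essentially the same mechanism as the paper: take a minimum-degree vertex, replace its label by a suitable integer multiple lying in $\left(\frac{n}{2},n\right)$, and apply Lemma \ref{lem3} to conclude the new vertex still has minimum degree. The paper simply multiplies by a power of $2$ rather than by $\left\lfloor n/f(u_0)\right\rfloor$ and does not separate out the complete case or invoke the preceding theorem, but these are cosmetic differences.
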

\begin{proof}
let $u\in V_{\delta}(D_n)$. Then the two cases follow, in the case of $\frac{n}{2}< f(u)< n$, we have nothing to prove. In the other case of $1<f(u)<\frac{n}{2}$, we get $\frac{n}{2}< 2^t f(u)< n$ for some $t>0$. Let $f(w)=2^tf(u)$, where $w\in V(D_n)$, $t>0$. Then we have $\frac{n}{2}<f(w)< n$. Using Lemma \ref{lem3}, we get $N(u)\supseteq N(w)$. Therefore, $\deg(w)\leq \deg(u)$. Since $u\in V_{\delta}(D_n)$, therefore $\deg(w)=\deg(u)=\delta(D_n)$. Hence, form the two cases, There exists a vertex $w\in V_{\delta}(D_n)$ such that $\frac{n}{2}< f(w)< n$.
\end{proof}
%%%%%%%%%%%%%%%%%%%%%%%%%%%%%%%%%%%%%%%%%%%%%%%%%%%%%%%%%%%%%%%%%%%%%%%%%%%
%%%%%%%%%%%%%%%%%%%%%%%%%%%%%%%%%%%%%%%%%%%%%%%%%%%%%%%%%%%%%%%%%%%%%%%%%%%
\subsection{The Degree Sequences of graphs}
%%%%%%%%%%%%%%%%%%%%%%%%%%%%%%%%%%%%%%%%%%%%%%%%%%%%%%%%%%%%%%%%%%%%%%%%%%%
%%%%%%%%%%%%%%%%%%%%%%%%%%%%%%%%%%%%%%%%%%%%%%%%%%%%%%%%%%%%%%%%%%%%%%%%%%%

\begin{dfn}
  Let $G$ be a graph of order $n$. A finite sequence $S_G=\{g_i\}_{i=0}^{n-1}=(g_0, g_1, \dots, g_{n-1})$, where $g_i:=|\{v\in V(G):\deg(v)=i\}|$, $i=0,1, \dots, n-1$ is called the vertex-degree sequence of $G$.
\end{dfn}

The reader notices that the standard notion of degree sequence is different in this literature. Obviously, we obtain the following two equations  $\sum\limits_{i=0}^{n-1}g_i=|V(G)|$ and $\sum\limits_{i=0}^{n-1} ig_i=2|E(G)|$.  

\begin{ex} 
  $S_{D_7}=(0, 0, 0, 1, 2, 1, 3)$, %\mbox{and} \ |E(D_{7})|=\frac{1}{2}\sum\limits_{i=0}^{6} id_i=17.$\\ 
  and
  $S_{D_{11}}=(0,0,0,0,1,1,3,0,2,1,3)$. %\mbox{and} \ |E(D_{11})|=\frac{1}{2}\sum\limits_{i=0}^{10} id_i=41.$
\end{ex}

%\begin{rem} Let $G$ be a graph of order $n$ with a vertex-degree sequence $S_G=(g_0, g_1, \dots, g_{n-1})$, where $g_i:=|\{v\in V(G):\deg(v)=i\}|$, $i=0,1, \dots, n-1$.
%\begin{enumerate}
 % \item Any two isomorphic graphs have the same vertex-degree sequence, but the converse is not true, e.g., the cyclic graph $C_6$ and the graph $C_3\cup C_3$ have the same vertex-degree sequence $(0,0,6,0,0,0)$, but $C_6\ncong C_3\cup C_3$.
 % \item $\sum\limits_{i=0}^{n-1}g_i=|V(G)|$ and $\sum\limits_{i=0}^{n-1} ig_i=2|E(G)|$.
%\end{enumerate}
%\end{rem}

\begin{thm}\label{thmVI2}
  Let $G$ be a graph  of order $n$ with $S_G=\{g_i\}_{i=0}^{n-1}$ and $D_n$ be the maximal Diophantine graph of order $n$ with  $S_{D_n}=\{d_i\}_{i=0}^{n-1}$. If the graph $G$ is Diophantine then for each $k\in\{0,1,\dots,n-1\}$, 
  $$\sum\limits_{i=0}^{k} g_{i}\geq \sum\limits_{i=0}^{k} d_{i}.$$
\end{thm}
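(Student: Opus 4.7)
The plan is to reduce the statement to the spanning-subgraph characterization given in Corollary \ref{spanning-thm} and then to observe that deleting edges can only decrease vertex degrees, so ``vertices of low degree can only proliferate'' when we pass from $D_n$ down to a spanning subgraph.

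First I would invoke Corollary \ref{spanning-thm}: since $G$ is Diophantine, there is a spanning subgraph $\acute{G}$ of $D_n$ with $G\cong_l \acute{G}$. In particular $V(\acute{G})=V(D_n)$, the degree sequences of $G$ and $\acute{G}$ coincide, so $S_G=S_{\acute{G}}$. Hence it suffices to prove the inequality with $\acute{G}$ in place of $G$.

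Next, for every $u\in V(D_n)=V(\acute{G})$, because $E(\acute{G})\subseteq E(D_n)$, we have
\begin{equation*}
   \deg_{\acute{G}}(u)\le \deg_{D_n}(u).
\end{equation*}
Fix $k\in\{0,1,\dots,n-1\}$. The above inequality implies the set inclusion
\begin{equation*}
   \{u\in V(D_n):\deg_{D_n}(u)\le k\}\ \subseteq\ \{u\in V(\acute{G}):\deg_{\acute{G}}(u)\le k\}.
\end{equation*}
Taking cardinalities and recalling the definition of the vertex-degree sequence,
\begin{equation*}
   \sum_{i=0}^{k} d_i \ =\ \bigl|\{u:\deg_{D_n}(u)\le k\}\bigr|
   \ \le\ \bigl|\{u:\deg_{\acute{G}}(u)\le k\}\bigr|\ =\ \sum_{i=0}^{k} g_i,
\end{equation*}
which is precisely the required inequality.

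There is no real obstacle here; the only subtlety is to remember that $\acute{G}$ is a \emph{spanning} subgraph (same vertex set as $D_n$), so that for each $u$ we genuinely compare $\deg_{\acute{G}}(u)$ with $\deg_{D_n}(u)$ for the same vertex $u$. If instead one tried to compare $G$ directly with $D_n$ without going through the labeling isomorphism, the bookkeeping of which vertex corresponds to which would be awkward. Using Corollary \ref{spanning-thm} to transfer the problem onto a common vertex set makes the argument essentially a one-line counting step.
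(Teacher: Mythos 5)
Your proposal is correct and follows essentially the same route as the paper's proof: both invoke Corollary \ref{spanning-thm} to replace $G$ by a spanning subgraph $\acute{G}$ of $D_n$ on the same vertex set and then use the fact that $\deg_{\acute{G}}(u)\le\deg_{D_n}(u)$ for every vertex $u$. The only difference is presentational: the paper argues by contradiction with the sets $A_{D_n}(k)$ and $A_G(k)$, whereas you obtain the same cardinality comparison directly from the set inclusion, which is a slightly cleaner write-up of the identical idea.
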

\begin{proof}
  Assume $(D_n,f_1)$ is the maximal Diophantine graph of order $n$ with $S_{D_n}=\{d_i\}_{i=0}^{n-1}$ and $(G,f_2)$ is a Diophantine graph  of order $n$ with $S_G=\{g_i\}_{i=0}^{n-1}$, where 
  $$f_1:V(D_n)\rightarrow \{1,2,\dots,n\}, \quad f_2:V(G)\rightarrow \{1,2,\dots,n\}.$$
  are Diophantine labelings of $D_n,G$ respectively. Suppose contrarily that there exists $k_0 \in\{0,1,\dots,n-1\}$ such that
  \begin{equation}\label{eqn5}
      \sum\limits_{i=0}^{k_0} g_{i}< \sum\limits_{i=0}^{k_0} d_{i}.
  \end{equation}
  Using Corollary \ref{spanning-thm}, we get that $(G,f_2)$ is labeling isomorphic to a spanning subgraph of $(D_n,f_1)$. Let $(\acute{G},f_1)$ be a spanning subgraph of $(D_n,f_1)$ such that 
  $(G,f_2)\cong_l(\acute{G},f_1).$ Then there is a bijective map $\varphi:V(G)\rightarrow V(\acute{G})$ such that for all $u,\acute{u}\in V(G)$, 
  $u\acute{u}\in E(G)$  if and only if $\varphi(u)\varphi(\acute{u})\in E(\acute{G})$ and $f_1(u)=f_2(\varphi(u)).$
  Then for every $u\in V(G)$,
  \begin{equation}\label{eqn2}
      \deg(u)=\deg(\varphi(u)).
  \end{equation}
  Define a map $\acute{\varphi}:V(G)\rightarrow V(D_n)$ such that for every $u\in V(G)$, $\acute{\varphi}(u):=\varphi(u).$ 
  Then the map $\acute{\varphi}$ is bijective and for every $u\in V(G)$, $f_2(u)=f_1(\varphi(u))=f_1(\acute{\varphi}(u))$.
  Since $(\acute{G},f_1)$ is a spanning subgraph of $(D_n,f_1)$ and using equation \eqref{eqn2}, one can see that for every $u\in V(G)$,
  \begin{equation}\label{eqn3}
      \deg(u)\leq \deg(\acute{\varphi}(u)),
  \end{equation} 
 Define the following two sets 
 $$A_{D_n}(k):=\{v\in V(D_n):\deg(v)\leq k\}\quad\mbox{and} \quad A_G(k):=\{u\in V(G):\deg(u)\leq k\},$$ 
 where $k=0,1,\dots,n-1$. Therefore, using equation \eqref{eqn5}, there exists $k_0\in\{ 0,1,\dots,n-1\}$ such that
 \begin{equation*}
    |A_{G}(k_0)|=\sum\limits_{i=0}^{k_0} g_{i}< \sum\limits_{i=0}^{k_0} d_{i}=|A_{D_n}(k_0)|.
 \end{equation*}
  Consequently, there exists a vertex $v\in A_{D_n}(k_0)\subseteq V(D_n)$ such that $u=\acute{\varphi}^{-1}(v)\notin A_{G}(k_0).$ Then,
  \begin{equation*}
      \deg(u)> k_0\geq\deg(v)=\deg(\acute{\varphi}(u)),
  \end{equation*}
  which contradicts equation \eqref{eqn3}. Hence, for every $k\in \{0,1,\dots,n-1\}$,
  \begin{equation*}
      \sum\limits_{i=0}^{k} g_{i}\geq  \sum\limits_{i=0}^{k} d_{i},
  \end{equation*}
  which completes the proof
\end{proof}

\begin{rem}\label{cor2}
  Let a graph $G$ of order $n$ have $S_G=\{g_i\}_{i=0}^{n-1}$ and the maximal Diophantine graph $D_n$ have  $S_{D_n}=\{d_i\}_{i=0}^{n-1}$.  If $\sum\limits_{i=0}^{k} g_{i}\geq\sum\limits_{i=0}^{k} d_{i}$ holds for every $k=0,1,\dots,n-1$, then %$|E(G)|\leq|E(D_n)|$ or  $F(G)\leq F(D_n)$ or  $\delta(G)\leq\delta(D_n).$
  $$|E(G)|\leq|E(D_n)|\quad \mbox{or}\quad F(G)\leq F(D_n)\quad\mbox{or}\quad \delta(G)\leq\delta(D_n).$$
\end{rem}

The proof of Remark \ref{cor2} closely resembles the corollaries in \cite{Seoud1}. The following table presents the quantities  $|E(G)|$, $F(G)$, $Cl(G)$, $\alpha(G)$, $\delta(G)$ and $S_{G}$, where $G=D_n$, $n=4,\dots,20$. %In what follows, we will refer to these quantities as basic bounds of the graphs $G$.

\begin{table}[h]
\centering
\begin{tabular}{c c c c c c c c}
\hline
$n$  & $|E(D_n)|$  & $F(D_n)$  & $Cl(D_n)$ & $\alpha(D_n)$ & $\delta(D_n)$  & $S_{D_n}=(d_i)$                            \\
\hline\hline
4  & 6           & 4           & 4           & 1             & 3              & $(0,0,0,4)$                                  \\
5  & 9           & 3           & 4           & 2             & 3              & $(0,0,0,2,3)$                                \\
6  & 15          & 6           & 6           & 1             & 5              & $(0,0,0,0,0,6)$                              \\
7  & 17          & 3           & 5           & 3             & 3              & $(0,0,0,1,2,1,3)$                            \\
8  & 27          & 6           & 7           & 2             & 6              & $(0,0,0,0,0,0,2,6)$                          \\
9  & 30          & 5           & 6           & 4             & 5              & $(0,0,0,0,0,4,0,0,5)$                        \\
10 & 41          & 5           & 7           & 3             & 7              & $(0,0,0,0,0,0,0,3,2,5)$                      \\
11 & 41          & 3           & 6           & 5             & 4              & $(0,0,0,0,1,1,3,0,2,1,3)$                    \\
12 & 65          & 10          & 11          & 2             & 10             & $(0,0,0,0,0,0,0,0,0,0,2,10)$                 \\
13 & 57          & 4           & 7           & 6             & 5              & $(0,0,0,0,0,2,1,3,0,2,0,1,4)$                \\
14 & 81          & 6           & 8           & 4             & 8              & $(0,0,0,0,0,0,0,0,1,0,3,2,2,6)$              \\
15 & 83          & 7           & 9           & 7             & 7              & $(0,0,0,0,0,0,0,1,6,0,0,0,0,1,7)$            \\
16 & 106         & 7           & 10          & 5             & 9              & $(0,0,0,0,0,0,0,0,0,1,0,4,0,2,2,7)$          \\
17 & 95          & 4           & 8           & 8             & 6              & $(0,0,0,0,0,0,2,1,1,4,1,0,2,0,1,1,4)$        \\
18 & 143         & 9           & 11          & 4             & 14             & $(0,0,0,0,0,0,0,0,0,0,0,0,0,0,4,3,2,9)$      \\
19 & 119         & 5           & 9           & 9             & 7              & $(0,0,0,0,0,0,0,3,1,1,4,1,0,2,0,0,1,1,5)$    \\
20 & 173         & 10          & 13          & 6             & 14             & $(0,0,0,0,0,0,0,0,0,0,0,0,0,0,6,0,0,0,4,10)$ \\
\hline
\end{tabular}
\caption{Basic Bounds of $D_n$, $n=4,\dots,20$}
\label{table1}
\end{table}
%%%%%%%%%%%%%%%%%%%%%%%%%%%%%%%%%%%%%%%%%%%%%%%%%%%%%%%%%%%%%%%%%%%%%%%%%%%
%%%%%%%%%%%%%%%%%%%%%%%%%%%%%%%%%%%%%%%%%%%%%%%%%%%%%%%%%%%%%%%%%%%%%%%%%%%
\section{Necessary Conditions for Diophantine Graphs}
%%%%%%%%%%%%%%%%%%%%%%%%%%%%%%%%%%%%%%%%%%%%%%%%%%%%%%%%%%%%%%%%%%%%%%%%%%%
%%%%%%%%%%%%%%%%%%%%%%%%%%%%%%%%%%%%%%%%%%%%%%%%%%%%%%%%%%%%%%%%%%%%%%%%%%%
  \hspace{0.5cm} According to Definition \ref{dfn2}, Corollaries \ref{corVI2}, \ref{corVI3}, \ref{corVI4}, \ref{thmVI1} and Theorem  \ref{thmVI2},  each of the following six conditions $\textbf{C1}, \dots,\textbf{C6}$ listed below constitutes a necessary condition for the existence of a Diophantine labeling for a given graph $G$ of order $n$:\\

  \textbf{C1}. $|E(G)|\leq|E(D_n)|$.\quad \textbf{C2}. $F(G)\leq F(D_n)$.$\quad$ \textbf{C3}. $Cl(G)\leq Cl(D_n)$.$\quad$ \textbf{C4}. $\alpha(G)\geq\alpha(D_n)$.\quad

  \textbf{C5}. $\delta(G)\leq\delta(D_n)$.\hspace{0.8cm} \textbf{C6}. There exists $k\in\{0, 1, \dots, n-1\}$ such that $\sum\limits_{i=0}^{k} g_{i}\geq\sum\limits_{i=0}^{k} d_{i}$.\\

Notice that, based on Remark \ref{cor2} and Example \ref{example1}, \textbf{C6} is stronger than each of the following three conditions \textbf{Ci}, i = 1, 2, 5. Conditions \textbf{Ci}, i = 1, 2, 3, 4, 5 are mutually independent while conditions \textbf{Ci}, i = 3, 4, 6 are also mutually independent. Additionally, Examples \ref{example1} and \ref{example2} illustrate some relations among these conditions.
%Examples \ref{example1} and \ref{example2} show some relations between these conditions.

\begin{ex}\label{example1}
  In this example, the previous six necessary conditions are studied in the following six graphs $G_i$, $i=1,\dots,6$ as shown in Figure \ref{figure2}. The basic bounds for these graphs are given in Table \ref{table2} and the corresponding maximal Diophantine graphs $D_7$ and $D_{11}$. The graph $G_1$ does not satisfy \textbf{C4}, the graph $G_2=C_4+\overline{K_3}$ does not satisfy \textbf{C5}, \textbf{C6}   $\left(\mbox{for} \ \sum\limits_{i=0}^{3}g_{i}<\sum\limits_{i=0}^{3}d_{i}\right)$, the graph $G_3$ does not satisfy \textbf{C3}, the graph $G_4=K_4+\overline{K_7}$ does not satisfy \textbf{C2}, \textbf{C6} $\left(\mbox{for} \ \sum\limits_{i=0}^{9} g_{i}<\sum\limits_{i=0}^{9} d_{i} \right)$, the graph $G_5$  does not satisfy \textbf{C1}, \textbf{C6} $\left(\mbox{for} \ \sum\limits_{i=0}^{5} g_{i}<\sum\limits_{i=0}^{5} d_{i} \right)$ and the graph $G_6$ does not satisfy \textbf{C6} $\left(\mbox{for} \ \sum\limits_{i=0}^{8}g_{i}<\sum\limits_{i=0}^{8}d_{i} \right)$. Therefore, the graphs $G_i$, $i=1,\dots,6$ are not Diophantine. However, the other necessary conditions are satisfied for the graphs $G_i$, $i=1,2,\dots,6$.
\end{ex}

\begin{figure*}[h!]
\centering
 \begin{subfigure}{0.3\textwidth}
 \centering
   \begin{tikzpicture}
[scale=.7,auto=center,every node/.style={circle,fill=blue!20}]
  \node (v1)[circle,fill=red!20] at (2,0)       {$4$};
  \node (v2)  at (0,2)       {$2$};
  \node (v3)  at (-2,0)      {$3$};
  \node (v4)  at (0,-2)      {$1$};
  \node (v5)  at (2,-2)      {$5$};
  \node (v6)[circle,fill=red!20] at (4,0)       {$6$};
  \node (v7)  at (2,2)       {$7$};
% these are the straight lines from one vertex to another
\draw (v1) -- (v2);
\draw (v1) -- (v3);
\draw (v1) -- (v4);
\draw (v1) -- (v5);
\draw (v1) -- (v7);

\draw (v2) -- (v3);
\draw (v2) -- (v4);
\draw (v2) -- (v5);
\draw (v2) -- (v7);

\draw (v3) -- (v4);
\draw (v3) -- (v5);
\draw (v3) -- (v7);

\draw (v4) -- (v5);

\draw (v5) -- (v6);

\draw (v6) -- (v7);
   \end{tikzpicture}\caption{Graph $G_1$}
  \end{subfigure}
~
 \begin{subfigure}{0.3\textwidth}
 \centering
  \begin{tikzpicture}
[scale=.7,auto=center,every node/.style={circle,fill=blue!20}]
  \node (v1)  at (1.8,2)        {$1$};
  \node (v2)  at (1.8,0.6)      {$2$};
  \node (v3)  at (0,-2)       {$3$};
  \node (v4)  at (0,-0.6)     {$4$};

  \node (v5)[circle,fill=red!20] at (6,1.5)        {$5$};
  \node (v6)[circle,fill=red!20] at (6,0)          {$6$};
  \node (v7)[circle,fill=red!20] at (6,-1.5)       {$7$};
% these are the straight lines from one vertex to another
\draw (v1) -- (v2);
\draw (v1) -- (v4);
\draw (v1) -- (v5);
\draw (v1) -- (v6);
\draw (v1) -- (v7);

\draw (v2) -- (v3);
\draw (v2) -- (v5);
\draw (v2) -- (v6);
\draw (v2) -- (v7);

\draw (v3) -- (v4);
\draw (v3) -- (v5);
\draw (v3) -- (v6);
\draw (v3) -- (v7);

\draw (v4) -- (v5);
\draw (v4) -- (v6);
\draw (v4) -- (v7);
  \end{tikzpicture}\caption{Graph $G_2$}
 \end{subfigure}
~
 \begin{subfigure}{0.3\textwidth}
 \centering
  \begin{tikzpicture}
[scale=.6,auto=center,every node/.style={circle,fill=blue!20}]
  \node (v1)[circle,fill=red!20] at (2,2)          {$1$};
  \node (v2)[circle,fill=red!20] at (2,0.8)        {$2$};
  \node (v3)[circle,fill=red!20] at (2,-0.8)       {$3$};
  \node (v4)[circle,fill=red!20] at (2,-2)         {$4$};

  \node (v5)   at (-2,-2)    {$6$};
  \node (v6)   at (-1,0)     {$7$};
  \node (v7)   at (-2,2)     {$8$};
  \node (v8)[circle,fill=red!20] at (-4,2.5)     {$9$};
  \node (v9)   at (-6,1.3)     {$10$};
  \node (v10)  at (-6,-1.3)    {$11$};
  \node (v11)  at (-4,-2.5)    {$5$};
% these are the straight lines from one vertex to another
\draw (v1) -- (v5);
\draw (v1) -- (v6);
\draw (v1) -- (v7);

\draw (v2) -- (v5);
\draw (v2) -- (v6);
\draw (v2) -- (v7);

\draw (v3) -- (v5);
\draw (v3) -- (v6);
\draw (v3) -- (v7);

\draw (v4) -- (v5);
\draw (v4) -- (v6);
\draw (v4) -- (v7);

\draw (v5) -- (v6);
\draw (v5) -- (v7);
\draw (v5) -- (v8);
\draw (v5) -- (v9);
\draw (v5) -- (v10);
\draw (v5) -- (v11);

\draw (v6) -- (v7);
\draw (v6) -- (v8);
\draw (v6) -- (v9);
\draw (v6) -- (v10);
\draw (v6) -- (v11);

\draw (v7) -- (v8);
\draw (v7) -- (v9);
\draw (v7) -- (v10);
\draw (v7) -- (v11);

\draw (v8) -- (v9);
\draw (v8) -- (v10);
\draw (v8) -- (v11);

\draw (v9) -- (v10);
\draw (v9) -- (v11);

\draw (v10) -- (v11);
  \end{tikzpicture}\caption{Graph $G_3$}
 \end{subfigure}
~
 \begin{subfigure}{0.3\textwidth}
 \centering
  \begin{tikzpicture}
[scale=.6,auto=center,every node/.style={circle,fill=blue!20}]
  \node (v1)   at (3,1)        {$1$};
  \node (v2)   at (1,3)        {$2$};
  \node (v3)   at (-1,3)       {$3$};
  \node (v4)   at (-3,1)       {$4$};

  \node (v5)[circle,fill=red!20]  at (-3.5,-1)    {$10$};
  \node (v6)[circle,fill=red!20]  at (-2.5,-2)    {$5$};
  \node (v7)[circle,fill=red!20]  at (-1.3,-2.5)  {$6$};
  \node (v8)[circle,fill=red!20]  at (0,-3)       {$7$};
  \node (v9)[circle,fill=red!20]  at (1.3,-2.5)   {$8$};
  \node (v10)[circle,fill=red!20] at (2.5,-2)     {$9$};
  \node (v11)[circle,fill=red!20] at (3.5,-1)     {$11$};
% these are the straight lines from one vertex to another
\draw (v1) -- (v2);
\draw (v1) -- (v3);
\draw (v1) -- (v4);
\draw (v1) -- (v5);
\draw (v1) -- (v6);
\draw (v1) -- (v7);
\draw (v1) -- (v8);
\draw (v1) -- (v9);
\draw (v1) -- (v10);
\draw (v1) -- (v11);

\draw (v2) -- (v3);
\draw (v2) -- (v4);
\draw (v2) -- (v5);
\draw (v2) -- (v6);
\draw (v2) -- (v7);
\draw (v2) -- (v8);
\draw (v2) -- (v9);
\draw (v2) -- (v10);
\draw (v2) -- (v11);

\draw (v3) -- (v4);
\draw (v3) -- (v5);
\draw (v3) -- (v6);
\draw (v3) -- (v7);
\draw (v3) -- (v8);
\draw (v3) -- (v9);
\draw (v3) -- (v10);
\draw (v3) -- (v11);

\draw (v4) -- (v5);
\draw (v4) -- (v6);
\draw (v4) -- (v7);
\draw (v4) -- (v8);
\draw (v4) -- (v9);
\draw (v4) -- (v10);
\draw (v4) -- (v11);
  \end{tikzpicture}\caption{Graph $G_4$}
 \end{subfigure}
~
 \begin{subfigure}{0.3\textwidth}
 \centering
  \begin{tikzpicture}
[scale=.6,auto=center,every node/.style={circle,fill=blue!20}]
  \node (v1)   at (3.7,0)        {$1$};
  \node (v2)   at (3,2.5)        {$2$};
  \node (v3)   at (1.5,4)        {$3$};
  \node (v4)   at (-1.5,4)       {$4$};
  \node (v5)   at (-3,2.5)       {$5$};
  \node (v6)   at (-3.7,0)       {$6$};

  \node (v7)[circle,fill=red!20]  at (-3,-2.5)    {$10$};
  \node (v8)[circle,fill=red!20]  at (-1.5,-3)    {$7$};
  \node (v9)[circle,fill=red!20]  at (0,-3)       {$8$};
  \node (v10)[circle,fill=red!20] at (1.5,-3)     {$9$};
  \node (v11)[circle,fill=red!20] at (3,-2.5)     {$11$};
% these are the straight lines from one vertex to another
\draw (v1) -- (v2);
\draw (v1) -- (v3);
\draw (v1) -- (v4);
\draw (v1) -- (v5);
\draw (v1) -- (v6);
\draw (v1) -- (v7);
\draw (v1) -- (v8);
\draw (v1) -- (v9);
\draw (v1) -- (v10);
\draw (v1) -- (v11);

\draw (v2) -- (v3);
\draw (v2) -- (v4);
\draw (v2) -- (v5);
\draw (v2) -- (v7);
\draw (v2) -- (v8);
\draw (v2) -- (v9);
\draw (v2) -- (v10);
\draw (v2) -- (v11);

\draw (v3) -- (v4);
\draw (v3) -- (v5);
\draw (v3) -- (v6);
\draw (v3) -- (v7);
\draw (v3) -- (v8);
\draw (v3) -- (v9);
\draw (v3) -- (v10);
\draw (v3) -- (v11);

\draw (v4) -- (v5);
\draw (v4) -- (v6);
\draw (v4) -- (v8);
\draw (v4) -- (v9);
\draw (v4) -- (v10);
\draw (v4) -- (v11);

\draw (v5) -- (v6);
\draw (v5) -- (v8);
\draw (v5) -- (v9);
\draw (v5) -- (v10);
\draw (v5) -- (v11);

\draw (v6) -- (v7);
\draw (v6) -- (v8);
\draw (v6) -- (v9);
\draw (v6) -- (v10);
\draw (v6) -- (v11);
  \end{tikzpicture}\caption{Graph $G_5$}
 \end{subfigure}
~
 \begin{subfigure}{0.3\textwidth}
 \centering
  \begin{tikzpicture}
[scale=.6,auto=center,every node/.style={circle,fill=blue!20}]
  \node[circle,fill=blue!20](v1)   at (4,1)      {$1$};
  \node[circle,fill=blue!20](v2)   at (2.5,3)    {$2$};
  \node[circle,fill=blue!20](v3)   at (0,4)      {$3$};
  \node[circle,fill=blue!20](v4)   at (-2.5,3)   {$4$};
  \node[circle,fill=blue!20](v5)   at (-4,1)     {$5$};

  \node[circle,fill=red!20](v6)   at (-4,-1)     {$6$};
  \node[circle,fill=red!20](v7)   at (-2.5,-1.5) {$7$};
  \node[circle,fill=red!20](v8)   at (0,-2)      {$8$};
  \node[circle,fill=red!20](v9)   at (2.5,-1.5)  {$9$};
  \node[circle,fill=red!20](v10)  at (4,-1)      {$10$};

  \node[circle,fill=red!20](v11)  at (0,5.5)     {$11$};
% these are the straight lines from one vertex to another

\draw (v2) -- (v1);
\draw (v2) -- (v3);
\draw (v2) -- (v4);
\draw (v2) -- (v5);
\draw (v2) -- (v6);
\draw (v2) -- (v7);
\draw (v2) -- (v8);
\draw (v2) -- (v9);
\draw (v2) -- (v10);
\draw (v2) -- (v11);

\draw (v3) -- (v1);
\draw (v3) -- (v4);
\draw (v3) -- (v5);
\draw (v3) -- (v6);
\draw (v3) -- (v7);
\draw (v3) -- (v8);
\draw (v3) -- (v9);
\draw (v3) -- (v10);
\draw (v3) -- (v11);

\draw (v4) -- (v1);
\draw (v4) -- (v5);
\draw (v4) -- (v6);
\draw (v4) -- (v7);
\draw (v4) -- (v8);
\draw (v4) -- (v9);
\draw (v4) -- (v10);
\draw (v4) -- (v11);

\draw (v1) -- (v5);
\draw (v1) -- (v6);
\draw (v1) -- (v7);
\draw (v1) -- (v8);
\draw (v1) -- (v9);
\draw (v1) -- (v10);

\draw (v5) -- (v6);
\draw (v5) -- (v7);
\draw (v5) -- (v8);
\draw (v5) -- (v9);
\draw (v5) -- (v10);
  \end{tikzpicture}\caption{Graph $G_6$}
 \end{subfigure}\caption{The graphs $G_1,G_2,G_3,G_4,G_5$ and $G_6$ are non-Diophantine}\label{figure2}
\end{figure*}
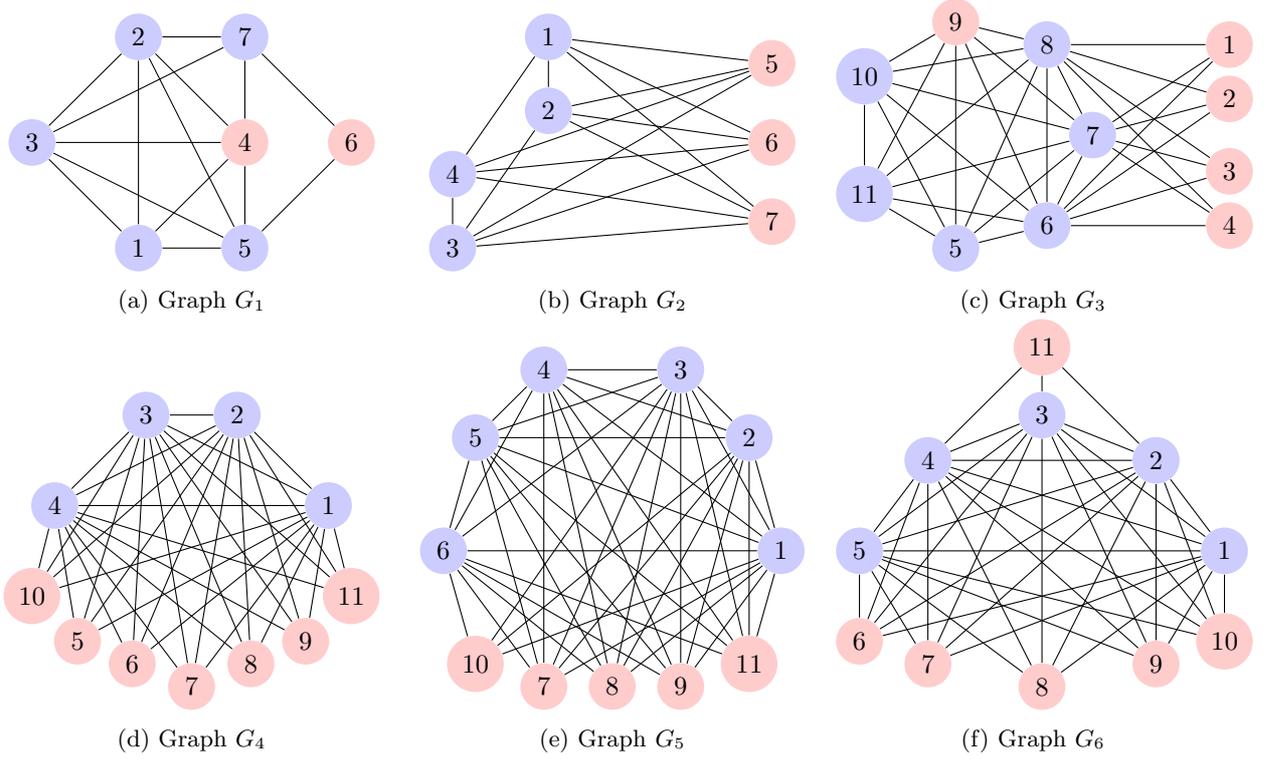

\begin{table}[h!]
  \centering
  \begin{tabular}{|c|c|c|c|c|c|c|}
  \hline
  Graph $G$  & $|E(G)|$      &   $F(G)$    &   $Cl(G)$ & $\alpha(G)$ & $\delta(G)$  & $S_{G}$    \\
  \hline
    $D_7$    &     17        &    3        &     5        &    3        &     3        & $(0,0,0,1,2,1,3)$    \\
  \hline
    $G_1$    &     15        &    0        &     5        &    \fbox{2} &     2        & $(0,0,1,0,2,4,0)$    \\
    $G_2$    &     16        &    0        &     3        &    3        &     \fbox{4} & $\fbox{(0,0,0,0,3,4,0)}$    \\
  \hline
    $D_{11}$ &     41        &    3        &     6        &    5        &     4        & $(0,0,0,0,1,1,3,0,2,1,3)$  \\
  \hline
    $G_3$    &     33        &    3        &     \fbox{7} &    5        &     3        & $(0,0,0,4,0,0,4,0,0,0,3)$  \\
    $G_4$    &     34        &    \fbox{4} &     5        &    7        &     4        & $\fbox{(0,0,0,0,7,0,0,0,0,0,4)}$  \\
    $G_5$    &     \fbox{42} &    2        &     6        &    5        &     5        & $\fbox{(0,0,0,0,1,0,4,0,2,2,2)}$  \\
    $G_6$    &     38        &    3        &     6        &    6        &     3        & $\fbox{(0,0,0,1,0,5,0,0,0,2,3)}$  \\
  \hline
  \end{tabular}
  \caption{Some Basic Bounds of $G_i$, $i=1,\dots,6$ and The Corresponding Bounds of $D_7$ and $D_{11}$}\label{table2}
\end{table}

\begin{ex}\label{example2}
  The following graph $G$ in Figure \ref{figure3} is not Diophantine (see \cite{sonbaty}). However, condition \textbf{C1} is satisfied for $|E(G)|=37<41=|E(D_{11})|$, condition \textbf{C2} is satisfied for $F(G)=3=F(D_{11})$, condition \textbf{C3} is satisfied for $Cl(G)=6=Cl(D_{11})$, condition \textbf{C4} is satisfied for $\alpha(G)=6>5=\alpha(D_{11})$, condition \textbf{C5} is satisfied for $\delta(G)=3<4=\delta(D_{11})$ and condition \textbf{C6} is satisfied for the two sequences 
  $$S_G=\{g_i\}_{i=0}^{10}=(0,0,0,1,1,4,0,0,1,1,3)\quad\mbox{and}\quad S_{D_{11}}=\{d_i\}_{i=0}^{10}=(0,0,0,0,1,1,3,0,2,1,3)$$
  hold the following condition, for every $k=0,1,\dots,10$
  $$\sum\limits_{i=0}^{k}g_{i}\geq\sum\limits_{i=0}^{k}d_{i}.$$ 
  Hence, all six conditions \textbf{Ci}, i=1, \dots, 6 are necessary and insufficient conditions.
  \end{ex}
  
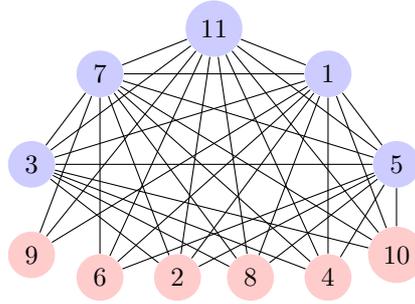
\begin{figure*}[h!]
 \centering
   \begin{tikzpicture}
   [scale=0.6,auto=center]
   \node[circle,fill=blue!20](v1)   at (4,1)      {$5$};
   \node[circle,fill=blue!20](v2)   at (2.5,3)    {$1$};
   \node[circle,fill=blue!20](v3)   at (0,4)      {$11$};
   \node[circle,fill=blue!20](v4)   at (-2.5,3)   {$7$};
   \node[circle,fill=blue!20](v5)   at (-4,1)     {$3$};

   \node[circle,fill=red!20](v8)   at (-0.8,-1.5)     {$2$};
   \node[circle,fill=red!20](v6)   at (-2.5,-1.5)     {$6$};
   \node[circle,fill=red!20](v9)   at (0.8,-1.5)      {$8$};
   \node[circle,fill=red!20](v7)   at (2.5,-1.5)      {$4$};
   \node[circle,fill=red!20](v11)  at (-4,-1)         {$9$};
   \node[circle,fill=red!20](v10)  at (4,-1)          {$10$};
% these are the straight lines from one vertex to another
   \draw (v2) -- (v1);
   \draw (v2) -- (v3);
   \draw (v2) -- (v4);
   \draw (v2) -- (v5);
   \draw (v2) -- (v6);
   \draw (v2) -- (v7);
   \draw (v2) -- (v8);
   \draw (v2) -- (v9);
   \draw (v2) -- (v10);
   \draw (v2) -- (v11);

   \draw (v3) -- (v1);
   \draw (v3) -- (v4);
   \draw (v3) -- (v5);
   \draw (v3) -- (v6);
   \draw (v3) -- (v7);
   \draw (v3) -- (v8);
   \draw (v3) -- (v9);
   \draw (v3) -- (v10);
   \draw (v3) -- (v11);

   \draw (v4) -- (v1);
   \draw (v4) -- (v5);
   \draw (v4) -- (v6);
   \draw (v4) -- (v7);
   \draw (v4) -- (v8);
   \draw (v4) -- (v9);
   \draw (v4) -- (v10);
   \draw (v4) -- (v11);

   \draw (v1) -- (v5);
   \draw (v1) -- (v6);
   \draw (v1) -- (v7);
   \draw (v1) -- (v8);
   \draw (v1) -- (v9);
   \draw (v1) -- (v10);

   \draw (v5) -- (v7);
   \draw (v5) -- (v8);
   \draw (v5) -- (v9);
   \draw (v5) -- (v10);
   \end{tikzpicture}
\caption{A graph $G$ does not satisfy the six necessary conditions thought $G$ is not Diophantine}
\label{figure3}
\end{figure*}
\newpage
\begin{flushleft}
\Large\textbf{Declarations}
\end{flushleft}
\begin{flushleft}
\textbf{Conflict of interest}: The authors declare that they have no conflict of interests.
\end{flushleft}

%%%%%%%%%%%%%%%%%%%%%%%%%%%%%%%%%%%%%%%%%%%%%%%%%%%%%%%%%%%%%%%%%%%%%%%%%%%

%%%%%%%%%%%%%%%%%%%%%%%%%%%%%%%%%%%%%%%%%%%%%%%%%%%%%%%%%%%%%%%%%%%%%%%%%%%


\begin{thebibliography}{9}

%\bibitem{} {\bf },\newblock{\it~}

%\bibitem{1} {\bf A. G. White},\newblock{\it~2-Banach spaces}, Math. Nachr., 42 (1969), 43-60.

\bibitem{Apostol} {\bf Apostol, T.M.},\newblock{\it~Introduction to Analytic Number Theory}. Springer Scince+Business Media, New York, 1976.

%\bibitem{Baskar} {\bf  Babujee, J.B.},\newblock{\it~Euler’s Phi Function and Graph Labeling,} International Journal of Contemporary Mathematical Sciences, Vol. 5, no. 20, 977-984, 2010.

\bibitem{Bickle} {\bf Bickle, A.},\newblock{\it~Fundamentals of Graph Theory}, American Mathematical Society, Providence, Rhode Island, USA, 2020.

\bibitem{Burton} {\bf Burton, D.M.},\newblock{\it~Elementary number theory,} 7th ed., a business unit of The McGraw-Hill Companies, New York, 2011.

\bibitem{sonbaty} {\bf Elsonbaty, A.; Al-harbi, E.},\newblock{\it~Iterative independence numbers for prime graphs.}, Ars Combinatorai, Vol. 151, 2020.

\bibitem{G-Y-Z} {\bf Gross, J.L.; Yellen, J.; Zhang, P.},\newblock{\it~Handbook of Graph Theory}, 2nd ed., CRC Press, Taylor \& Francis Group, USA, 2014.

\bibitem{Harary} {\bf Harary, F.},\newblock{\it~Graph Theory}, Addison-Wesley Publishing Company, Reading, Massachusetts, 1969.

\bibitem{S-C-L} {\bf Hsieh, S.-M.; Hsu, C.-C.; Hsu, L.-F.},\newblock{\it~Efficient Method to Perform Isomorphism Testing of Labeled Graphs}, Computational Science and Its Applications - ICCSA 2006, part V, pp 422-431.

\bibitem{Nasr} {\bf Nasr, A.; Elsonbaty, A.; Seoud, M.A.; Anwar, M.}, \newblock{\it~Diophantine graphs}, submittd for publication.

\bibitem{Robert} {\bf Robert, A.M.},\newblock{\it~A Course in $p$-adic Analysis.} Springer Science+Business Media, LLC,  New York, 2000.

\bibitem{Rosa} {\bf Rosa, A.},\newblock{\it~On certain valuations of the vertices of a graph}, Main Library-University of Lows, pp 349-355, 1967.

%\bibitem{Rosen} {\bf Rosen, K.H.},\newblock{\it~Elementary Number Theory and Its Applications,} 5th ed., Addison-Wesley Publishing Company, Reading, Massachusetts, 2005.

\bibitem{Rosen2} {\bf Rosen, K.H.}, \newblock{\it~Discrete Mathematics and Its Applications,} 7th ed., a business unit of The McGraw-Hill Companies, New York, 2012.

\bibitem{Seoud1} {\bf Seoud, M.A.; Elsonbaty, A.; Mahran, A.E.A.}, \newblock{\it~On prime graphs}, Ars Combinatorai, Vol. 104, pp 241-260, 2012.

\bibitem{Seoud-Y} {\bf Seoud, M.A.; Youssef, M.Z.}, \newblock{\it~On prime labelings of graphs.} Congressus Numerantium, Vol. 141, pp 203-215, 1999.

\bibitem{Tout} {\bf Tout, A.; Dabboucy, A.N.; Howalla, K.}, \newblock{\it~Prime labeling of graphs}. National Academy Science Letters, Vol. 11, pp 365-368, 1982.

%\bibitem{Deretsky} {\bf Deretsky, T.; Lee, S.M.; Mitchem, J.},\newblock{\it~On vertex prime labeling of graphs in graph theory.} Combinatorics and Applications Vol.I. J.Alavi, G.Chartrand and O.Ollerman and A.Schwank, 6th eds International conference Theory and Application of Graphs (Wiley, Newyork, 1991) 359-369.



\end{thebibliography}
\end{document}